\newtheorem{theorem}{Theorem}[section]
\newtheorem{definition}{Definition}[section]
\numberwithin{equation}{section}
\newenvironment{proof}{\indent{\em Proof:}}{\quad \hfill
$\Box$\vspace*{2ex}}
\newcommand{\pp}{{\mathfrak P}}
\begin{document}
\setcounter{page}{1}
\begin{center}
\vspace{0.4cm} {\large{\bf  On Impulsive Delay Integrodifferential Equations\\
 with Integral Impulses }} \\
 \vspace{0.5cm}
 Kishor D. Kucche $^{1}$\\
  kdkucche@gmail.com\\

 \vspace{0.35cm}
 Pallavi U. Shikhare $^{2}$ \\
  jananishikhare13@gmail.com \\
 
 \vspace{0.35cm}
 $^{1,2}$ Department of Mathematics, Shivaji University, Kolhapur-416 004, Maharashtra, India.

\end{center}

\def\baselinestretch{1.0}\small\normalsize
\begin{abstract}
The present research paper is devoted to investigate the existence, uniqueness of mild solutions for impulsive delay integrodifferential equations with integral impulses in Banach spaces. We also investigate the dependence of solutions on initial conditions, parameters and the functions involved in the equations. Our analysis is based on semigroup theory, fixed point technique and an application of  Pachpatte’s type integral inequality with integral impulses. An example is provided in support of existence result.
\end{abstract}
\noindent\textbf{Key words:} Impulsive integrodifferential equation;  Impulsive integral inequality; Integral jump condition; Fixed point, Dependence of solutions. \\
\noindent
\textbf{2010 Mathematics Subject Classification:} 
37L05, 47D60, 34A37, 34G20, 35B30, 35A23.
\def\baselinestretch{1.5}

\section{Introduction}
Numerous real world physical phenomena can't be modeled as differential equations with classical initial conditions, specially the issues in the part of biology and medical sciences, for instances, the dynamics of populations subject to sudden changes (e.g. diseases, harvesting and so forth.) and few problems in the field of physics, chemical technology, population dynamics, medicine, mechanics, biotechnology and economics. But, such physical phenomena can be modeled by means of differential equations with impulsive conditions. It very well may be noticed that the impulsive conditions are the blends of  classical initial conditions and the short-term perturbations whose span can be insignificant in correlation with the span of the procedure. Therefore  the differential and integrodifferential equations with impulse effect have importance in the modeling many physical phenomena in which sudden changes occurs. The details relating to the theory and applications of  impulsive differential equations (IDEs) and its  impact in further development can be found in the monograph by Bainov and Simeonov \cite{Bainov1}
, Lakshmikantham et al. \cite{Bainov2}, and Samoilenko and Perestyuk \cite{Samoilenko} and in the interesting papers  \cite{Wang,Frigon,Xiaodi,Yan,Bashir,Donal,Liu,Song,Benchohra1,Benchohra2}.

In the begin, Frigon and O'regan \cite{Frigon}, using the fixed point approach determined variety of existence principles for the IDE
\begin{equation} \label{imp0}
\begin{cases}
w'(t)=f\left(t, w(t) \right),\, 0< t <b,\,~t \not= t_{k},\\
w(t_{k}^+)= I_{k}\left( w(t_{k}^-)\right),\,~k=1,2,\cdots,m,
\end{cases}
\end{equation}
with the initial condition $w(0)  =w_{0}$, where $ x(t^{+}_{k})= \underset{\epsilon\to 0^{+}}{\lim} x(t_{k}+\epsilon)$ and $ x(t^{-}_{k})= \underset{\epsilon\to 0^{-}}{\lim} x(t_{k}+\epsilon)$. In addition, utilizing the idea of upper and lower solutions, authors have derived existence results for  the IDE \eqref{imp0} with boundary condition $w(0)=w(b)$. First,  Wang et al. \cite{Wang}  presented the idea of Ulam's type stabilities for  ordinary IDE and derived these stability results by means of integral inequality of Gronwall type for piecewise continuous functions.
Liu \cite{Liu1} have extended the theory to nonlinear impulsive evolution equation \begin{equation} \label{imp1}
\begin{cases}
w'(t)=\mathscr{A}w(t)+f\left(t, w(t) \right),\, 0< t <b,\,~t \not= t_{k},\\
\Delta w(t_{k})= I_{k}\left( w(t_{k})\right),\, ~,\,~k=1,2,\cdots,m, \\
w(0)  =w_{0} , 
\end{cases}
\end{equation}
in a Banach space, where $\mathscr{A}:D(A)\subseteq X \to X$ is the infinitesimal generator of a strongly continuous semigroup of bounded linear operators and applying the semigroup theory explored  the existence and uniqueness of the mild solutions by utilizing fixed point argument.  Further, the author have demonstrated that if f is continuously  differentiable then the mild solutions give rise to classical solution.
Hernández, D. O'regan \cite{Donal}  analyzed a class of abstract impulsive differential equations of the form \eqref{imp1} considering extraordinary sorts of condition on the function $I_{k}$, so as it would allow to think about the impulsive  partial differential equations.

Utilizing the Lerey-Schauder nonlinear alternative, Arjunan and Anguraj \cite{Arjunan} derived  existence of mild solutions  for the impulsive delay differential equations. 
\begin{equation} \label{imp2}
\begin{cases}
w'(t)=\mathscr{A}w(t)+f\left(t, w_{t} \right),\, t \in I=[0,b],~t \not= t_{k},~k=1,2,\cdots,m, \\
w(t)  =\phi(t),\, t\in [-r, 0], \, 0<r<\infty , \\
\Delta w(t_{k})= I_{k}\left( w(t_{k})\right),\, ~k=1,2,\cdots,m
\end{cases}
\end{equation}
Benchora et al. \cite{Benchohra} have extended the above study to  
first and second order impulsive semilinear neutral functional differential equations. 

Thiramanus et al. \cite{Thiramanus} built up the impulsive differential
and impulsive integral inequalities with integral jump conditions  and gave its applications to examine the boundedness and other properties of IDEs with integral jump conditions. Thaiprayoon et al. \cite{Thaiprayoon1,Thaiprayoon2} exhibited the  maximum principles for second-order impulsive integrodifferential equations with integral jump conditions and also developed the monotone iterative technique for a periodic boundary value problem. More subtleties relating to  integral jump conditions can be found in \cite{Thiramanus,Thaiprayoon1,Thaiprayoon2}.

It is seen that the theory of impulsive differential equations have been significant progress. However, very few work have  been accounted  relating to the 
theory of  impulsive delay differential equations with integral impulses. 
Motivated by the previously mentioned work and the applicability of  impulsive differential equation in the scientific community it is imperative to examining the existence, uniqueness, stability and data dependence of solution for broad class of impulsive integrodifferential equations. 

Motivated by the work of \cite{Thiramanus,Thaiprayoon1,Thaiprayoon2}, in the present paper, we consider  the impulsive integrodifferential equation with integral impulses and finite delay described in the form:
\begin{small}
\begin{align} \label{e1.1}
& w'(t)=\mathscr{A} w(t)+V\left(t, w_t, \int_0^t U(t,s,w_s)ds \right), ~t \in I=[0,b],~t \not= t_{k},~k=1,2,\cdots,m, \\
& w(t) =\varsigma(t), ~t \in [-r, 0],\label{e1.2}\\
& \Delta w(t_{k})= I_{k}\left( \int_{t_{k}-\tau_{k}}^{t_{k}-\theta_{k}}G(s,w_s)ds\right),~k=1,2,\cdots,m\label{e1.3}.
\end{align}
\end{small}
where $\mathscr{A}:D(A)\subseteq X \to X$ is the infinitesimal generator of a strongly continuous semigroup of bounded linear operators $\{\mathscr{T}(t)\}_{t\geq 0}$ in the Banach $\left(X, \left\| \cdot\right\|\right)$;  $V: I\times C([-r,0],X)\times X \to X, ~U: I\times I\times C([-r,0],X) \to X $ and $G:J \times X\to X $ are
given functions, $\varsigma\in  C([-r,0],X),(0<r<\infty), 0=t_{0}<t_{1}<\cdots<t_{m}<t_{m+1}= b,\, I_{k}\in C(X,X)$ are bounded functions, $ 0\leq \theta_{k} \leq \tau_{k} \leq t_{k}-t_{k-1} $ for $ k=1,2,\cdots,m$,  $\Delta w(t_{k})= w(t^{+})-w(t^{-}_{k})$,  $ w(t^{+}_{k})= \underset{\epsilon\to 0^{+}}{\lim} w(t_{k}+\epsilon)$ and $ w(t^{-}_{k})= \underset{\epsilon\to 0^{-}}{\lim} w(t_{k}+\epsilon)$. For any continuous function $w$ defined on $[-r,b]-\left\lbrace t_{1},\cdots,t_{m}\right\rbrace $ and  $t\in[0,b]$, denote by $ w_t$ the element of $ C([-r,0],X)$ defined by $w_t(\theta)=w(t+\theta), ~\theta \in[-r,0]$.

Our fundamental point is to research the existence, uniqueness of mild solutions for the problem \eqref{e1.1}--\eqref{e1.3} and examine the dependence  of mild solution on initial conditions, parameters and on the functions involved in the right hand side of the equations \eqref{e1.1}--\eqref{e1.3}. The innovative idea  in the present paper is the utilization of the Pachpatte’s type integral inequality with integral impulses. The innovative thought  to be noted here that  all the qualitative properties mentioned above of the mild solutions for the  problem \eqref{e1.1}--\eqref{e1.3} have been acquired by means of Pachpatte’s type integral inequality with integral impulses for broad class of impulsive Volterra integrodifferential equations just with Lipschitz conditions on the functions $V$and $U$.  

For more uses of the Pachpatte's integral inequalities in the investigation of qualitative properties of solutions  and different kind of Ulam's types stabilities  of general form of integrodifferential equations we refer to the work of Pachpatte \cite{Pachpatte1,Pachpatte} and Kucche et al. \cite{Kucche1,Kucche2}.

The paper is organized as follows.  In section $2$, we introduce notations and state theorems which are useful in further analysis. Section $3$ deals with existence and uniqueness results pertaining to the problem \eqref{e1.1}--\eqref{e1.3}. In section $4$ we establish the dependence of solution on different data. In section 5, an example is given to illustrate the existence and uniqueness results.
\section{Preliminaries} \label{preliminaries}
In this section, we introduce notations and few theorems that are used throughout this paper.
Let $(X,\left\|\cdot\right\|)$ be a Banach space. Then
$$\mathscr{C} =C([-r,0],X)=\left\lbrace \varsigma~:~\varsigma:[-r,0]\to X ~\text{is continuous} \right\rbrace$$
is a Banach space with respect to the norm
\begin{small}
$$\left\|\varsigma \right\|= \sup\left\lbrace\left\|\varsigma(\theta)\right\|: -r<\theta<0\right\rbrace .$$
\end{small}
Further, $C(I,X)=\left\lbrace w~:~w: I\to X ~\text{is continuous} \right\rbrace $ is a Banach space with respect to the norm
\begin{small}
 $$\left\|w\right\|= \sup \left\lbrace\left\|w(t)\right\|: 0<t<b\right\rbrace .$$
 \end{small}
\begin{theorem} [\cite{{Pazy}}] \label{th2.1}
Let $\{\mathscr{T}(t)\}_{t\geq 0}$  is a $C_0$-semigroup. There exists constants $\omega\geq 0$ and $ M\geq 1$
such that $$\|\mathscr{T}(t)\|\leq Me^{\omega t},   ~0\leq t < \infty . $$
\end{theorem}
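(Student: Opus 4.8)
The plan is to first establish local boundedness of the operator norm $\|\mathscr{T}(t)\|$ on a small interval $[0,\eta]$, and then to propagate this bound to all $t\geq 0$ by means of the semigroup law. The only genuine input beyond elementary algebra is the strong continuity of $\{\mathscr{T}(t)\}_{t\geq 0}$ together with the uniform boundedness principle.

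First I would show that there exist $\eta>0$ and $M\geq 1$ with $\|\mathscr{T}(t)\|\leq M$ for every $t\in[0,\eta]$. I would argue by contradiction: were there no such bound on any interval shrinking to the origin, one could extract a sequence $t_n\downarrow 0$ with $\|\mathscr{T}(t_n)\|\to\infty$. By the uniform boundedness principle, applied to the family of bounded operators $\{\mathscr{T}(t_n)\}$, there would then exist some $x\in X$ for which $\sup_n\|\mathscr{T}(t_n)x\|=\infty$. But strong continuity at $t=0$ forces $\mathscr{T}(t_n)x\to\mathscr{T}(0)x=x$, so $\{\|\mathscr{T}(t_n)x\|\}$ is a convergent, hence bounded, sequence — a contradiction. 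Consequently $M:=\sup_{0\leq t\leq\eta}\|\mathscr{T}(t)\|<\infty$, and since $\mathscr{T}(0)=I$ we necessarily have $M\geq 1$.

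Next I would extend the estimate to arbitrary $t\geq 0$. Given such a $t$, write $t=n\eta+\delta$ with $n\in\N\cup\{0\}$ and $0\leq\delta<\eta$. Iterating the semigroup identity $\mathscr{T}(s+u)=\mathscr{T}(s)\mathscr{T}(u)$ yields $\mathscr{T}(t)=\mathscr{T}(\delta)\,\mathscr{T}(\eta)^{\,n}$, and submultiplicativity of the operator norm gives
$$\|\mathscr{T}(t)\|\leq\|\mathscr{T}(\delta)\|\,\|\mathscr{T}(\eta)\|^{\,n}\leq M^{\,n+1}.$$
Setting $\omega:=\eta^{-1}\log M\geq 0$ and using $n\leq t/\eta$, I obtain
$$M^{\,n+1}=M\,e^{\,n\log M}\leq M\,e^{(t/\eta)\log M}=M\,e^{\omega t},$$
which is exactly the asserted bound, valid for all $0\leq t<\infty$.

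The main obstacle is the first step, since establishing local boundedness near the origin is the sole place where strong continuity is actually exploited, and it relies on the uniform boundedness principle rather than on any explicit estimate for $\mathscr{T}(t)$. Once that uniform bound $M$ on $[0,\eta]$ is secured, the passage to all $t\geq 0$ is a purely routine computation with the semigroup identity.
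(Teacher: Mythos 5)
The paper does not prove this statement at all --- it is quoted verbatim from Pazy's monograph as a known preliminary, so there is no in-paper argument to compare against. Your proof is correct and is precisely the standard one given in Pazy (local boundedness of $\|\mathscr{T}(t)\|$ near $t=0$ via the contrapositive of the uniform boundedness principle combined with strong continuity at the origin, the observation that $M\geq 1$ because $\mathscr{T}(0)=I$, and then propagation to all $t\geq 0$ by writing $t=n\eta+\delta$ and using submultiplicativity with $\omega=\eta^{-1}\log M$), so nothing is missing.
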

We utilize the following fixed point theorem due to Burton and Kirk \cite{Kirk}.
\begin{theorem} [\cite{{Kirk}}] \label{th2.3}
Let $\mathcal{X}$ be a Banach space and $\mathcal {A,\,B}: \mathcal X \to \mathcal{X}$ two operators satisfying:
\begin{itemize}
\item[(i)] $\mathcal {A}$ is a contraction, and
\item[(ii)] $ \mathcal {B} $ is completely continuous.
\end{itemize}
Then either
\begin{itemize}
\item[(a)] the operator equation $w=  \mathcal {A}(w)+ \mathcal {B}(w)$ has a solution, or
\item[(b)] the set
\begin{small}
$$\mathcal{Z}=\left\lbrace w \in  \mathcal {X} : w= \lambda  \mathcal {A} \left( \frac{w}{\lambda}\right) +\lambda  \mathcal {B}(w) \right\rbrace$$
\end{small}is unbounded for $\lambda \in (0, 1)$.
\end{itemize}
\end{theorem}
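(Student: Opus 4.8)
The plan is to reduce the fixed point equation for the sum $\mathcal{A}+\mathcal{B}$ to a single fixed point problem for a completely continuous operator, and then invoke the classical Leray--Schauder nonlinear alternative on the whole space. The key preliminary observation is that, since $\mathcal{A}$ is a contraction (say with constant $\alpha<1$), the operator $I-\mathcal{A}$ is a bijection of $\mathcal{X}$ onto itself. Indeed, for each fixed $y\in\mathcal{X}$ the map $x\mapsto \mathcal{A}(x)+y$ is again a contraction with constant $\alpha$, so by the Banach contraction principle it has a unique fixed point; this assigns to every $y$ a unique $x$ solving $x-\mathcal{A}(x)=y$. Hence $(I-\mathcal{A})^{-1}:\mathcal{X}\to\mathcal{X}$ is well defined, and a routine estimate from $\|(I-\mathcal{A})^{-1}y_1-(I-\mathcal{A})^{-1}y_2\|\le \|y_1-y_2\|+\alpha\|(I-\mathcal{A})^{-1}y_1-(I-\mathcal{A})^{-1}y_2\|$ shows it is Lipschitz with constant $1/(1-\alpha)$, in particular continuous.

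First I would rewrite the target equation $w=\mathcal{A}(w)+\mathcal{B}(w)$ in the equivalent form $(I-\mathcal{A})(w)=\mathcal{B}(w)$, i.e. $w=(I-\mathcal{A})^{-1}\mathcal{B}(w)$. Setting $\mathcal{N}=(I-\mathcal{A})^{-1}\mathcal{B}$, a solution of the original equation is precisely a fixed point of $\mathcal{N}$. Because $\mathcal{B}$ is completely continuous and $(I-\mathcal{A})^{-1}$ is continuous, the composition $\mathcal{N}$ is completely continuous: it is continuous as a composition of continuous maps, and it carries bounded sets into relatively compact sets, since the continuous image of a relatively compact set is relatively compact.

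Next I would apply the Leray--Schauder alternative to the completely continuous operator $\mathcal{N}$ on $\mathcal{X}$. This yields the dichotomy: either $\mathcal{N}$ has a fixed point, giving a solution of $w=\mathcal{A}(w)+\mathcal{B}(w)$ and hence conclusion (a), or the set $\{w\in\mathcal{X}:w=\lambda\,\mathcal{N}(w),\ \lambda\in(0,1)\}$ is unbounded. It then remains to identify this exceptional set with $\mathcal{Z}$. If $w=\lambda\,\mathcal{N}(w)=\lambda\,(I-\mathcal{A})^{-1}\mathcal{B}(w)$ with $\lambda\in(0,1)$, then $w/\lambda=(I-\mathcal{A})^{-1}\mathcal{B}(w)$, so applying $I-\mathcal{A}$ gives $w/\lambda-\mathcal{A}(w/\lambda)=\mathcal{B}(w)$, which rearranges to $w=\lambda\,\mathcal{A}(w/\lambda)+\lambda\,\mathcal{B}(w)$. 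Since every step is reversible, the two sets coincide, and the unboundedness gives conclusion (b).

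The step I expect to be the main obstacle is the careful verification that $(I-\mathcal{A})^{-1}$ is globally defined and Lipschitz and that its composition with the completely continuous $\mathcal{B}$ remains completely continuous; this is where the hypotheses on $\mathcal{A}$ and $\mathcal{B}$ are genuinely used. The algebraic identification of the exceptional set with $\mathcal{Z}$ is elementary but must be carried out precisely to match the exact normalization $\lambda\,\mathcal{A}(w/\lambda)$ prescribed in the statement.
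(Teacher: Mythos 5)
Your proposal is correct, but note that the paper does not prove this statement at all: it is quoted as a known result of Burton and Kirk from the cited reference, so there is no in-paper proof to compare against. Your argument --- inverting $I-\mathcal{A}$ via the contraction principle, checking that $(I-\mathcal{A})^{-1}$ is Lipschitz with constant $1/(1-\alpha)$, observing that $\mathcal{N}=(I-\mathcal{A})^{-1}\mathcal{B}$ is completely continuous, applying the Leray--Schauder (Schaefer) alternative to $\mathcal{N}$, and identifying the exceptional set $\{w : w=\lambda\mathcal{N}(w)\}$ with $\mathcal{Z}$ --- is precisely the standard proof given in Burton and Kirk's original paper, and all the steps you flag as delicate are carried out correctly.
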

The qualitative analysis of the solution of equations \eqref{e1.1}--\eqref{e1.3} has been done by utilizing the following
 Pachpatte's type integral inequality.
\begin{theorem} [\cite{{Tariboon}}] \label{th2.2}
Let the sequence $\left\lbrace t_{k}\right\rbrace ^{\infty}_{k=1}$ satisfies $0\leq t_{0}<t_{1}<\cdots ~\text{and}~\underset{t\rightarrow \infty}{\lim} ~t_{k}=\infty.$ Let $u \in PC^{1} \left(\mathbb{R}_{+},\mathbb{R}_{+} \right) $ and $u(t)$ is left-continuous at $t_{k},\, k=1,2,\cdots$. Assume that $f,~g$ are nonnegative  continuous functions defined on $ \mathbb{R}_+$ and $ n(t)$ is a positive and nondecreasing
continuous function defined on $ \mathbb{R}_+$.
In addition, assume that constants $\beta_{k}\geq 0,~0\leq \theta_{k} \leq \tau_{k} \leq t_{k}-t_{k-1} $ for $k=1,2,\cdots $.\\
\begin{small}
If
\begin{align*}
u(t)&\leq n(t)+\int_0^t f(s)u(s) ds+\int_0^t f(s) \left(\int_0^s
g(\sigma)u(\sigma)d\sigma\right)ds+\underset{0<t_{k}<t}{\sum} ~\beta_{k} \int_{t_{k}-\tau_{k}}^{t_{k}-\theta_{k}} u(s) ds,\quad t\geq 0,
\end{align*}
\end{small}
then
\begin{small}
\begin{align*}
u(t)&\leq n(t) \underset{0<t_{k}<t}{\prod} ~C_{k} \exp \left (\int_{t_{\alpha}}^t f(s)\left[1+\int_0^s g(\sigma)d\sigma\right] ds\right),\quad t\geq 0,
\end{align*}
\end{small}
where
\begin{small}
\begin{align*}
\alpha&=\max \left\lbrace k~:~ t\geq t_{k},~k=1,2,\cdots\right\rbrace
\end{align*}
\end{small}
and
\begin{small}
\begin{align*}
C_{k}&=\exp \left (\int_{t_{k-1}}^{t_{k}} f(s)\left[1+ \int_0^s g(\sigma)d\sigma\right] ds\right)+\beta_{k} \int_{t_{k}-\tau_{k}}^{t_{k}-\theta_{k}} \exp \left (\int_{t_{k-1}}^{s} f(\sigma)\left[1+ \int_0^\sigma g(\xi)d\xi\right]d\sigma\right)ds.
\end{align*}
\end{small}
\end{theorem}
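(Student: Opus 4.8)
The plan is to reduce the inequality to the impulse-free, unit-majorant case by a normalization, and then to combine a Pachpatte-type double-integral estimate on each inter-impulse interval with a recursion across the jump points $t_k$. First I would use that $n$ is positive and nondecreasing: dividing the assumed inequality by $n(t)$ and exploiting $n(s)\le n(t)$ for $0\le s\le t$ (equivalently $1/n(t)\le 1/n(s)$) to carry the factor inside every integral and every jump term, I obtain, for $p(t):=u(t)/n(t)$,
\[
p(t)\le 1+\int_0^t f(s)p(s)\,ds+\int_0^t f(s)\Big(\int_0^s g(\sigma)p(\sigma)\,d\sigma\Big)ds+\sum_{0<t_k<t}\beta_k\int_{t_k-\tau_k}^{t_k-\theta_k}p(s)\,ds .
\]
This isolates $n(t)$ as an overall multiplicative factor in the final bound and removes any need to differentiate $n$, which is only assumed continuous.

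Next I would set the right-hand side equal to an auxiliary function $z(t)$. Since $f,g\ge 0$ and $\beta_k\ge 0$, the function $z$ is nonnegative, nondecreasing, satisfies $p\le z$ and $z(0)=1$, and is continuously differentiable on each $(t_{k-1},t_k)$ while jumping at the points $t_k$. On such an interval, differentiation together with $p\le z$ gives $z'(t)\le f(t)\big[z(t)+\int_0^t g(\sigma)z(\sigma)\,d\sigma\big]$, and monotonicity of $z$ yields $\int_0^t g z\le z(t)\int_0^t g$, hence the separable inequality $z'(t)/z(t)\le f(t)\big[1+\int_0^t g(\sigma)\,d\sigma\big]$. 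Integrating this from $t_{k-1}$ shows $z(s)\le z(t_{k-1}^+)\exp\!\big(\int_{t_{k-1}}^s f(\sigma)[1+\int_0^\sigma g(\xi)\,d\xi]\,d\sigma\big)$ for $s\in(t_{k-1},t_k]$.

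The heart of the argument is threading these interval estimates through the impulses. At $t_k$ the jump is $z(t_k^+)-z(t_k)=\beta_k\int_{t_k-\tau_k}^{t_k-\theta_k}p(s)\,ds\le \beta_k\int_{t_k-\tau_k}^{t_k-\theta_k}z(s)\,ds$. Because the hypothesis $\tau_k\le t_k-t_{k-1}$ guarantees $[t_k-\tau_k,\,t_k-\theta_k]\subseteq[t_{k-1},t_k]$, I may substitute the interval bound for $z(s)$ into this jump integral; combined with the left-limit bound at $t_k$, this produces exactly $z(t_k^+)\le C_k\,z(t_{k-1}^+)$. Writing $z_k:=z(t_k^+)$ with $z_0=1$, the recursion $z_k\le C_k z_{k-1}$ telescopes to $z_k\le\prod_{j=1}^k C_j$. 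Finally, for $t$ in an interval $(t_\alpha,t_{\alpha+1})$ with $\alpha=\max\{k:t\ge t_k\}$, applying the interval estimate on $(t_\alpha,t]$ with initial value $z_\alpha\le\prod_{0<t_k<t}C_k$ and multiplying back by $n(t)$ yields the claimed bound.

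The step I expect to be the main obstacle is the bookkeeping at the jumps: bounding the integrand $z(s)$ on $[t_k-\tau_k,t_k-\theta_k]$ by the inter-impulse exponential (legitimate only because that subinterval lies in $[t_{k-1},t_k]$) and matching the outcome precisely to the definition of $C_k$, while tracking one-sided limits so that the product $\prod_{0<t_k<t}$ and the factor $\exp(\int_{t_\alpha}^t\cdots)$ together account for all of $[0,t]$. The double-integral term contributes the additional reduction $\int_0^t g z\le z(t)\int_0^t g$, which is exactly what collapses the nested integrals into the single weight $f(s)\big[1+\int_0^s g(\sigma)\,d\sigma\big]$ appearing in the exponent.
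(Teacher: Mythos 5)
The paper itself offers no proof of this statement: Theorem \ref{th2.2} is imported verbatim from the cited work of Thiramanus, Tariboon and Ntouyas, so there is no in-paper argument to compare yours against. Your proposal is correct and follows the standard (and, as far as I can tell, the original) route: normalize by the positive nondecreasing majorant $n$ to reduce to $n\equiv 1$, set $z$ equal to the resulting right-hand side so that $p=u/n\le z$ with $z(0)=1$, use monotonicity of $z$ to collapse the nested integral and obtain $z'(t)/z(t)\le f(t)\bigl[1+\int_0^t g(\sigma)\,d\sigma\bigr]$ on each $(t_{k-1},t_k)$, and thread the integrated exponential bounds through the jumps (legitimately, since $\tau_k\le t_k-t_{k-1}$ places $[t_k-\tau_k,t_k-\theta_k]$ inside $[t_{k-1},t_k]$) to get $z(t_k^+)\le C_k\,z(t_{k-1}^+)$ and telescope. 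The only caveats are boundary ones that belong to the statement rather than to your argument: the recursion starts from $z(t_0^+)=1$ only when $t_0=0$ (the case actually used in this paper), and at $t=t_j$ exactly the definition $\alpha=\max\{k: t\ge t_k\}$ makes the displayed bound read $n(t_j)\prod_{k<j}C_k$, whereas the argument yields $n(t_j)\prod_{k<j}C_k\exp\bigl(\int_{t_{j-1}}^{t_j}f(s)[1+\int_0^s g(\sigma)\,d\sigma]\,ds\bigr)$; for $t$ strictly between impulse times your bound matches the stated one exactly.
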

\section{Existence and Uniqueness Results}

To define the concept of mild solution of \eqref{e1.1}--\eqref{e1.3}, we consider the following space
\begin{align*}
&\varSigma =\left\lbrace  w:[-r,b]\to X :w_{k}\in C(I_{k},X),~k=0,\cdots,m ~\text{and there exist}~w(t^{-}_{k})~\text{and}\right.\\
& \left. w(t^{+}_{k}),~\text{with} ~w(t^{-}_{k}) = w(t_{k}),~k =1, \cdots , m,~w(t)=\varsigma(t),~\forall ~t \in [-r,0]\right\rbrace.
\end{align*}
Define $ \left\| \cdot\right\| _{\varSigma }: \varSigma\to [0, \infty) $
by
\begin{small}
$$\left\|w\right\|_{\varSigma }= \max\left\lbrace\left\|w_{k}\right\|_{I_{k}}: ~k=0\cdots m \right\rbrace ,$$
\end{small}
where $ w_{k}$ is the restriction of $w$ to $I_{k} = [t_{k},  ~t_{k+1}],~k = 0,\cdots,m$. Then $(\varSigma, \left\| \cdot\right\| _{\varSigma } )$ is a Banach space.

Let $ w\in \varSigma $ is a solution of problem the  \eqref{e1.1}--\eqref{e1.3}. Then comparing with the abstract Cauchy problem \cite{{Pazy}}, we have
\begin{small}
$$ w(t)= \mathscr{T}(t)\varsigma(0)+\int_0^t \mathscr{T}(t-s) V\left(s, w_s, \int_0^s U(s,\sigma,w_{\sigma})d\sigma \right)ds,~t \in [0,t_{1}).$$
\end{small}
This gives
\begin{small}
\begin{align}\label{e.k}
w(t^{-}_{1}) = \mathscr{T}(t_{1})\varsigma(0)+\int_0^{t_{1}} \mathscr{T}(t_{1}-s) V\left(s, w_s, \int_0^s U(s,\sigma,w_{\sigma})d\sigma \right)ds.
\end{align}
\end{small}
Now, by condition \eqref{e1.3}, we have
\begin{small}
$$ w(t^{+}_{1})= w(t^{-}_{1})+ I_{1}\left( \int_{t_{1}-\tau_{1}}^{t_{1}-\theta_{1}}G(s,w_s)ds\right).$$
\end{small}
Thus again by comparing with abstract Cauchy problem \cite{{Pazy}}, and using semigroup property for $ t\in (t_{1},t_{2})$ we have
\begin{small}
\begin{align*}
 w(t)& = \mathscr{T}(t-t_{1}) w(t^{+}_{1}) +\int_{t_{1}}^t \mathscr{T}(t-s) V\left(s, w_s, \int_0^s U(s,\sigma,w_{\sigma})d\sigma \right)ds\\
&=\mathscr{T}(t-t_{1})\left[ w(t^{-}_{1})+ I_{1}\left( \int_{t_{1}-\tau_{1}}^{t_{1}-\theta_{1}}G(s,w_s)ds\right)\right]\\
& \qquad + \int_{t_{1}}^t \mathscr{T}(t-s) V\left(s, w_s, \int_0^s U(s,\sigma,w_{\sigma})d\sigma \right)ds\\
&=\mathscr{T}(t-t_{1})\mathscr{T}(t_{1})\varsigma(0)+\int_0^{t_{1}} \mathscr{T}(t-t_{1}) \mathscr{T}(t_{1}-s) V\left(s, w_s, \int_0^s U(s,\sigma,w_{\sigma})d\sigma \right)ds\\
& \qquad+\mathscr{T}(t-t_{1})  I_{1}\left( \int_{t_{1}-\tau_{1}}^{t_{1}-\theta_{1}}G(s,w_s)ds\right)+\int_{t_{1}}^t \mathscr{T}(t-s) V\left(s, w_s, \int_0^s U(s,\sigma,w_{\sigma})d\sigma \right)ds\\
&=\mathscr{T}(t)\varsigma(0)+\int_0^t \mathscr{T}(t-s) V\left(s, w_s, \int_0^s U(s,\sigma,w_{\sigma})d\sigma \right)ds+\mathscr{T}(t-t_{1})  I_{1}\left( \int_{t_{1}-\tau_{1}}^{t_{1}-\theta_{1}}G(s,w_s)ds\right).
\end{align*}
\end{small}
Reiterating these procedures, one can prove that
\begin{small}
\begin{align*}
w(t)&=\mathscr{T}(t)\varsigma(0)+\int_0^t \mathscr{T}(t-s) V\left(s, w_s, \int_0^s U(s,\sigma,w_{\sigma})d\sigma \right)ds\\
&\qquad+\underset{0<t_{k}<t}{\sum} \mathscr{T}(t-t_{k}) I_{k}\left( \int_{t_{k}-\tau_{k}}^{t_{k}-\theta_{k}}G(s,w_s)ds\right),~ t\in I.
\end{align*}
\end{small}
\begin{definition}
A function $w:[-r,b]\to X $ is called a mild solution of \eqref{e1.1}--\eqref{e1.3} if $~w(t)=\varsigma(t),\text{on}\,[-r,0],$  and
\begin{small}
\begin{align*}
w(t)&=\mathscr{T}(t)\varsigma(0)+\int_0^t \mathscr{T}(t-s) V\left(s, w_s, \int_0^s U(s,\sigma,w_{\sigma})d\sigma \right)ds\\
&\qquad+\underset{0<t_{k}<t}{\sum} \mathscr{T}(t-t_{k}) I_{k}\left( \int_{t_{k}-\tau_{k}}^{t_{k}-\theta_{k}}G(s,w_s)ds\right),~ t\in I.
\end{align*}
\end{small}
\end{definition}
To obtain existence and uniqueness results pertaining to the problem  \eqref{e1.1}--\eqref{e1.3}, we need the following conditions:
\begin{itemize}\item[(H1)]
There exists functions $N_{V}, N_{U}\in C(I,\mathbb{R}_+)$ and the constant $L_{G}>0$  such that
\begin{small}
\begin{itemize}
\item[(i)]$\|V(t,\vartheta_{1},w_{1})-V(t,\vartheta_{2},w_{2})\| \leq N_{V}(t) \left(\|\vartheta_{1}-\vartheta_{2}\|_{\mathscr{C}}+\|w_{1}-w_{2}\|\right);$
\item[(ii)]$ \|U(t,s,\vartheta_{1})-U(t,s,\vartheta_{2})\|\leq N_{U}(t) \,  \|\vartheta_{1}-\vartheta_{2}\|_{\mathscr{C}};$
\item[(iii)] $\|G(t,\vartheta_{1})-G(t,\vartheta_{2})\| \leq L_{G} \,  \|\vartheta_{1}-\vartheta_{2}\|_{\mathscr{C}};$
\end{itemize}
\end{small}
for all $ t,s \in I,~\vartheta_{1},\vartheta_{2}\in \mathscr{C}$ and $w_{1},w_{2}\in X .$
\item[(H2)]There exist constants $\mathscr{D}_{k} (k= 1, \cdots, m) $ such that $\left\| I_{k}(w_{1})-I_{k}(w_{2})\right\| \leq \mathscr{D}_{k}\left\|w_{1}-w_{2}\right\|;~ k= 1, \cdots, m$ for $w_{1},w_{2}\in X$.
\end{itemize}
\begin{theorem}\label{th3.3}
(Existence) Assume that the hypotheses (H1)--(H2) are satisfied and let
\begin{small}
\begin{align} \label{c1}
\sum_{k=1}^{m}2b\, \mathscr {M} L_{G}\,\mathscr{D}_{k} < 1 .
\end{align}
\end{small} Then
the problem \eqref{e1.1}--\eqref{e1.3} has at least one solution in $\varSigma$.
\end{theorem}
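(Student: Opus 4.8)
The plan is to recast \eqref{e1.1}--\eqref{e1.3} as a fixed point problem on $\varSigma$ and invoke the Burton--Kirk alternative (Theorem \ref{th2.3}). I would define $\mathcal{N}=\mathcal{A}+\mathcal{B}$ on $\varSigma$ by
$$\mathcal{A}(w)(t)=\underset{0<t_{k}<t}{\sum}\mathscr{T}(t-t_{k})\,I_{k}\left(\int_{t_{k}-\tau_{k}}^{t_{k}-\theta_{k}}G(s,w_s)\,ds\right),$$
$$\mathcal{B}(w)(t)=\mathscr{T}(t)\varsigma(0)+\int_0^t \mathscr{T}(t-s)\,V\left(s,w_s,\int_0^s U(s,\sigma,w_{\sigma})\,d\sigma\right)ds,$$
with $w(t)=\varsigma(t)$ on $[-r,0]$, so that the impulsive part sits in $\mathcal{A}$ and the semigroup evolution in $\mathcal{B}$. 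By the Definition above, a fixed point of $\mathcal{N}$ is exactly a mild solution; hence it suffices to check that $\mathcal{A}$ is a contraction, that $\mathcal{B}$ is completely continuous, and then to rule out alternative (b) of Theorem \ref{th2.3}.

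First I would verify that $\mathcal{A}$ is a contraction. Writing $\mathscr{M}:=\sup_{t\in[0,b]}\|\mathscr{T}(t)\|$, which is finite by Theorem \ref{th2.1}, and combining the bound $\|\mathscr{T}(t)\|\le\mathscr{M}$ with the Lipschitz hypotheses (H2) on $I_k$ and (H1)(iii) on $G$, together with $\tau_k-\theta_k\le t_k-t_{k-1}\le b$, a direct estimate bounds the Lipschitz constant of $\mathcal{A}$ on $\varSigma$ by the left-hand side of \eqref{c1}. Since \eqref{c1} forces this quantity to be strictly less than $1$, $\mathcal{A}$ is a contraction.

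Next I would prove that $\mathcal{B}$ is completely continuous. Continuity of $\mathcal{B}$ follows from the strong continuity of $\{\mathscr{T}(t)\}_{t\ge 0}$ together with the growth bounds coming from (H1)(i)--(ii) and dominated convergence. For compactness, fix a bounded set $B_{\rho}=\{w\in\varSigma:\|w\|_{\varSigma}\le\rho\}$; uniform boundedness of $\mathcal{B}(B_{\rho})$ is immediate from $\|\mathscr{T}(t)\|\le\mathscr{M}$ and the linear growth of $V$ and $U$ on bounded sets, while equicontinuity is obtained by decomposing $\mathcal{B}(w)(t)-\mathcal{B}(w)(\tau)$, for $t_k\le\tau<t\le t_{k+1}$, into the term $[\mathscr{T}(t)-\mathscr{T}(\tau)]\varsigma(0)$ plus integral remainders and letting $t-\tau\to 0$ via strong continuity of the semigroup, arguing interval by interval because of the impulse structure. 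I expect the genuinely delicate point, and hence the main obstacle, to be the compactness step: in the infinite-dimensional space $X$ the Arzel\`a--Ascoli argument also demands that $\{\mathcal{B}(w)(t):w\in B_{\rho}\}$ be relatively compact in $X$ for each $t$, which rests on compactness of $\{\mathscr{T}(t)\}_{t>0}$ (or, failing that, a measure-of-noncompactness refinement); making this rigorous is the crux of establishing complete continuity.

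Finally, to exclude alternative (b), I would show that $\mathcal{Z}=\{w\in\varSigma:w=\lambda\,\mathcal{A}(w/\lambda)+\lambda\,\mathcal{B}(w),\ \lambda\in(0,1)\}$ is bounded. For $w\in\mathcal{Z}$, taking norms and using $\lambda<1$ (the factor $1/\lambda$ inside $\mathcal{A}$ cancelling against the outer $\lambda$ in the Lipschitz part of $G$), the growth estimates from (H1) and (H2) produce, for $u(t):=\sup_{-r\le\xi\le t}\|w(\xi)\|$, an inequality of the form
$$u(t)\le n(t)+\int_0^t f(s)u(s)\,ds+\int_0^t f(s)\left(\int_0^s g(\sigma)u(\sigma)\,d\sigma\right)ds+\underset{0<t_k<t}{\sum}\beta_k\int_{t_k-\tau_k}^{t_k-\theta_k}u(s)\,ds,$$
where $n$ is positive, nondecreasing and continuous, $f,g$ are nonnegative and continuous, and $\beta_k,f,g$ are built from $\mathscr{M},N_V,N_U,L_G,\mathscr{D}_k$; passing to the sup $u$ is legitimate because $\|w_s\|_{\mathscr{C}}\le u(s)$ and the right-hand side is nondecreasing in $t$. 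This is precisely the hypothesis of the Pachpatte-type inequality (Theorem \ref{th2.2}), whose conclusion furnishes an explicit bound on $u(t)$, and therefore on $\|w\|_{\varSigma}$, that is independent of $\lambda$ and of $w\in\mathcal{Z}$. Hence $\mathcal{Z}$ is bounded, alternative (b) fails, and Theorem \ref{th2.3} yields a fixed point of $\mathcal{N}=\mathcal{A}+\mathcal{B}$, that is, a mild solution of \eqref{e1.1}--\eqref{e1.3}.
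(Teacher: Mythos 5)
Your proposal follows essentially the same route as the paper: the same splitting into the impulsive operator $\mathcal{A}$ (a contraction by (H1)(iii), (H2) and \eqref{c1}) and the semigroup operator $\mathcal{B}$ (completely continuous), with alternative (b) of Theorem \ref{th2.3} excluded by feeding $u(t)=\sup_{-r\le\xi\le t}\|w(\xi)\|$ into the Pachpatte-type inequality of Theorem \ref{th2.2}. The point you flag as the crux --- that relative compactness of $\left\lbrace \mathcal{B}(w)(t):w\in B_{\rho}\right\rbrace$ in $X$ rests on compactness of $\mathscr{T}(t)$ for $t>0$ --- is precisely the step the paper carries out by invoking compactness of the semigroup, an assumption it uses without listing among the hypotheses, so your sketch matches the paper's argument including that caveat.
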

\begin{proof}
Transform the problem \eqref{e1.1}--\eqref{e1.3} into a fixed point problem consider the operators $\mathcal{A},\mathcal{B}:\varSigma \to \varSigma$ defined by
\begin{small}
\begin{equation*}
\mathcal{A}(w)(t) =
\begin{cases}
0 & \text{if $t \in [-r,0], $}\\
\underset{0<t_{k}<t}{\sum} \mathscr{T}(t-t_{k}) I_{k}\left( \int_{t_{k}-\tau_{k}}^{t_{k}-\theta_{k}}G(s,w_s)ds\right) & \text{if $t \in [0,b] $}.
\end{cases}
\end{equation*}
\end{small}
and
\begin{small}
\begin{equation*}
\mathcal{B}(w)(t) =
\begin{cases}
\varsigma(t) & \text{if $t \in [-r,0], $}\\
\mathscr{T}(t)\varsigma(0)+\int_0^t \mathscr{T}(t-s) V\left(s, w_s, \int_0^s U(s,\sigma,w_{\sigma})d\sigma \right) ds & \text{if $t \in [0,b] $}.
\end{cases}
\end{equation*}
\end{small}
In the view of the Theorem \ref{th2.3} to prove the operator equation $w=  \mathcal {A}(w)+ \mathcal {B}(w)$ have fixed point, we prove the following statements.
\begin{small}
\begin{itemize}
\item[(i)] $\mathcal {B}: \varSigma \to \varSigma$ is completely continuous.
\item[(ii)]  $\mathcal {A}: \varSigma \to \varSigma$ is contraction.
\item[(iii)] the set  $$\mathcal{Z}=\left\lbrace w \in  \varSigma : w= \lambda  \mathcal {A} \left( \frac{w}{\lambda}\right) +\lambda  \mathcal {B}(w) \right\rbrace$$
is bounded for some $ \lambda \in (0,1)$.
\end{itemize}
\end{small}
The proof of the operator $\mathcal {B}$ is completely continuous is given in the following steps.\\
Step 1: $\mathcal{B} : \varSigma \to \varSigma$ is continuous .

Let any $w\in \varSigma $ and $\left\lbrace w_{n}\right\rbrace ^{\infty}_{n=1}$ be any sequence in $ \varSigma $ such that $w_{n}\to w$. Then $ \left\|w_{n}\right\|_{\varSigma} \leq q^{'} ,\, \forall\, n\in \mathbb{N} $ and $ q^{'}>0 $ and hence $ \left\|w\right\|_{\varSigma} \leq q^{'}$. Thus we have $\left\lbrace w_{n}\right\rbrace \subseteq B_{q^{'}} $ and $w\, \epsilon\, B_{q^{'}} $. Now by  Lebesgue dominated convergence theorem,
\begin{small}
\begin{align*}
 \left\| \mathcal{B}(w_{n})-\mathcal{B}(w)\right\| _{\varSigma}
&\leq\underset{[-r, b]}{\sup}\left\lbrace \int_0^t \left\|  \mathscr{T}(t-s)\right\| _{B(X)} \left\| V\left( s, w_{n_{s}},\int_0^s U\left( s,\sigma ,w_{n_{\sigma}}\right) d\sigma\right)\right.\right.\\
&\qquad \left. \left.-V\left( s, w_{s},\int_0^s U\left( s,\sigma ,w_{\sigma}\right) d\sigma\right)\right\| ds\right\rbrace \to 0 \,\text{as}\, n \to \infty.
\end{align*}
\end{small}
Thus $\mathcal{B}$ is continuous.

Step 2: $\mathcal {B}$ maps bounded sets into bounded sets in $\varSigma$.

Let $ w\in B_{q}=\left\lbrace w\in \varSigma:\left\| w\right\|_{\varSigma}\leq q\right\rbrace$. It is sufficient to prove that there exist $l>0$ such that $ \left\| \mathcal {B}(w)\right\|_{\varSigma}\leq l,\, \forall\, w \in B_{q}$. Now for any $t\in [0,b]$,
\begin{small}
\begin{align}\label{b1}
& \left\| \mathcal {B}(w)(t)\right\|\nonumber\\
&\quad\leq \left\| \mathscr{T}(t)\right\| _{B(X)} \left\|  \varsigma(0)\right\|+\int_0^t \left\|  \mathscr{T}(t-s)\right\| _{B(X)} \left\| V\left( s, w_{s},\int_0^s U\left( s,\sigma ,w_{\sigma}\right) d\sigma\right) \right.\nonumber\\
& \qquad \left. -V\left( s, 0,\int_0^s U\left( s,\sigma, 0\right) d\sigma\right)\right\| ds +\int_0^t \left\|  \mathscr{T}(t-s)\right\| _{B(X)} \left\|  V\left( s, 0,\int_0^s U\left( s,\sigma, 0\right) d\sigma\right)\right\| ds.
\end{align}
\end{small}
By the Theorem \ref{th2.1} there exist constant $\mathscr {M}\geq 1 $ such that
\begin{align}
\left\| \mathscr{T}(t)\right\| _{B(X)} \leq \mathscr {M},\, t\geq 0. \label{e0.3}
\end{align}
Thus from inequality \eqref{b1} and by hypothesis (H1)(i) and (H1)(ii) we have
\begin{small}
\begin{align*}
& \left\| \mathcal {B}(w)(t)\right\|\\
&\quad\leq \mathscr {M} \left\|  \varsigma\right\|_{\mathscr {C}}+\int_0^t \mathscr {M} N_{V}(s) \left\lbrace  \left\|w_{s} \right\|_{\mathscr {C}}+ \int_0^s N_{U}(\sigma) \left\|w_{\sigma} \right\|_{\mathscr {C}} d\sigma \right\rbrace ds\\
&\qquad+\int_0^t \mathscr {M}\left\|  V\left( s, 0,\int_0^s U\left( s,\sigma, 0\right) d\sigma\right)\right\| ds\\
&\quad\leq \mathscr {M} \left\|  \varsigma\right\|_{\mathscr {C}}+\int_0^b \mathscr {M} N_{V}(s) \left\lbrace  \left\|w_{s} \right\|_{\mathscr {C}}+ \int_0^s N_{U}(\sigma) \left\|w_{\sigma} \right\|_{\mathscr {C}} d\sigma \right\rbrace ds\\
&\qquad+\int_0^b \mathscr {M}\left\|  V\left( s, 0,\int_0^s U\left( s,\sigma, 0\right) d\sigma\right)\right\| ds\\
&\quad\leq \mathscr {M} \left\|  \varsigma\right\|_{\mathscr {C}}+\int_0^b q \mathscr{M} N_{V}(s) \left\lbrace 1 + \int_0^s N_{U}(\sigma) d\sigma \right\rbrace ds+\int_0^b \mathscr {M}\left\|  V\left( s, 0,\int_0^s U\left( s,\sigma, 0\right) d\sigma\right)\right\| ds\, =l^{*}.
\end{align*}
\end{small}
Further, for any $t\in [-r,0]$ we have,
$\left\| \mathcal {B}(w)(t)\right\| =  \left\| \varsigma(t)\right\|
 \leq \left\| \varsigma\right\|_{\mathscr {C}}$. Thus
$$\left\| \mathcal {B}\,w\right\|_{\varSigma}= \underset{[-r,b]}{\sup} \left\| \mathcal {B}(w)(t)\right\|
 \leq \max\left\lbrace \left\| \varsigma\right\|_{\mathscr {C}},\,l^{*}\right\rbrace =l.$$
 Step 3: $\mathcal{B}$ maps bounded sets into equicontinuous sets of $\varSigma$.

Let $ B_{q}=\left\lbrace w\in \varSigma:\left\| w\right\|_{\varSigma}\leq q\right\rbrace$ be any bounded set of $\varSigma$. Then utilizing hypothesis (H1)(i) and (H1)(ii) for any $\xi_{1},\xi_{2}\in [0,b]$ with $\xi_{1}<\xi_{2}$ we have,
\begin{small}
\begin{align*}
&\left\| \mathcal{B}(w)(\xi_{2})-\mathcal{B}(w)(\xi_{1})\right\|\\
&\leq \left\| \mathscr{T}(\xi_{1})-\mathscr{T}(\xi_{2})\right\| _{B(X)} \left\|  \varsigma(0)\right\|+\int_0^{\xi_{2}} \left\|  \mathscr{T}(\xi_{2}-s)\right\| _{B(X)} \left\| V\left( s, w_{s},\int_0^s U\left( s,\sigma ,w_{\sigma}\right) d\sigma\right) \right\| ds\\
& \qquad -\int_0^{\xi_{1}} \left\|  \mathscr{T}(\xi_{1}-s)\right\| _{B(X)} \left\| V\left( s, w_{s},\int_0^s U\left( s,\sigma ,w_{\sigma}\right) d\sigma\right) \right\| ds\\
&= \left\| \mathscr{T}(\xi_{1})-\mathscr{T}(\xi_{2})\right\| _{B(X)} \left\|  \varsigma(0)\right\|+\int_0^{\xi_{1}} \left\|  \mathscr{T}(\xi_{2}-s)-\mathscr{T}(\xi_{1}-s)\right\| _{B(X)} \left\| V\left( s, w_{s},\int_0^s U\left( s,\sigma ,w_{\sigma}\right) d\sigma\right) \right\| ds\\
& \qquad +\int_{\xi_{1}}^{\xi_{2}} \left\|  \mathscr{T}(\xi_{2}-s)\right\| _{B(X)} \left\| V\left( s, w_{s},\int_0^s U\left( s,\sigma ,w_{\sigma}\right) d\sigma\right) \right\| ds\\
& \leq\left\| \mathscr{T}(\xi_{1})-\mathscr{T}(\xi_{2})\right\| _{B(X)} \left\|  \varsigma(0)\right\|+\int_0^{\xi_{1}} \left\|  \mathscr{T}(\xi_{2}-s)-\mathscr{T}(\xi_{1}-s)\right\| _{B(X)}\\
& \quad\left\| V\left( s, w_{s},\int_0^s U\left( s,\sigma ,w_{\sigma}\right) d\sigma\right) - V\left( s, 0,\int_0^s U\left( s,\sigma, 0\right) d\sigma\right)\right\| ds\\
& \quad +\int_0^{\xi_{1}} \left\| \mathscr{T}(\xi_{1})-\mathscr{T}(\xi_{2})\right\| _{B(X)} \left\| V\left( s, 0,\int_0^s U\left( s,\sigma, 0\right) d\sigma\right)\right\|ds\\
& \qquad +\int_{\xi_{1}}^{\xi_{2}} \left\|  \mathscr{T}(\xi_{2}-s)\right\| _{B(X)} \left\| V\left( s, w_{s},\int_0^s U\left( s,\sigma ,w_{\sigma}\right) d\sigma\right)-V\left( s, 0,\int_0^s U\left( s,\sigma, 0\right) d\sigma\right) \right\| ds\\
& \quad\qquad + \int_{\xi_{1}}^{\xi_{2}} \left\|  \mathscr{T}(\xi_{2}-s)\right\| _{B(X)}\left\| V\left( s, 0,\int_0^s U\left( s,\sigma, 0\right) d\sigma\right) \right\|ds\\
& \leq\left\| \mathscr{T}(\xi_{1})-\mathscr{T}(\xi_{2})\right\| _{B(X)} \left\|  \varsigma\right\|_{\mathscr C}+\int_0^{\xi_{1}} \left\|  \mathscr{T}(\xi_{2}-s)-\mathscr{T}(\xi_{1}-s)\right\| _{B(X)}
N_{V}(s) \left\lbrace  \left\|w_{s} \right\|_{\mathscr {C}}\right.\\
& \quad \left. + \int_0^s N_{U}(\sigma) \left\|w_{\sigma} \right\|_{\mathscr {C}} d\sigma \right\rbrace ds+\int_0^{\xi_{1}} \left\| \mathscr{T}(\xi_{1})-\mathscr{T}(\xi_{2})\right\| _{B(X)} \left\| V\left( s, 0,\int_0^s U\left( s,\sigma, 0\right) d\sigma\right)\right\|ds\\
& \qquad +\int_{\xi_{1}}^{\xi_{2}} \left\|  \mathscr{T}(\xi_{2}-s)\right\| _{B(X)} N_{V}(s) \left\lbrace  \left\|w_{s} \right\|_{\mathscr {C}}+ \int_0^s N_{U}(\sigma) \left\|w_{\sigma} \right\|_{\mathscr {C}} d\sigma \right\rbrace ds\\
& \quad\qquad + \int_{\xi_{1}}^{\xi_{2}} \left\|  \mathscr{T}(\xi_{2}-s)\right\| _{B(X)}\left\| V\left( s, 0,\int_0^s U\left( s,\sigma, 0\right) d\sigma\right) \right\|ds\\
& \leq\left\| \mathscr{T}(\xi_{1})-\mathscr{T}(\xi_{2})\right\| _{B(X)} \left\|  \varsigma\right\|_{\mathscr C}+\int_0^{\xi_{1}} \left\|  \mathscr{T}(\xi_{2}-s)-\mathscr{T}(\xi_{1}-s)\right\| _{B(X)}
q\,N_{V}(s) \left\lbrace  1 + \int_0^s N_{U}(\sigma)d\sigma \right\rbrace ds\\
& \quad +\int_0^{\xi_{1}} \left\| \mathscr{T}(\xi_{1})-\mathscr{T}(\xi_{2})\right\| _{B(X)} \left\| V\left( s, 0,\int_0^s U\left( s,\sigma, 0\right) d\sigma\right)\right\|ds\\
& \qquad +\int_{\xi_{1}}^{\xi_{2}} \left\|  \mathscr{T}(\xi_{2}-s)\right\| _{B(X)}\,q\, N_{V}(s) \left\lbrace  1+ \int_0^s N_{U}(\sigma)  d\sigma \right\rbrace ds\\
& \quad\qquad + \int_{\xi_{1}}^{\xi_{2}} \left\|  \mathscr{T}(\xi_{2}-s)\right\| _{B(X)}\left\| V\left( s, 0,\int_0^s U\left( s,\sigma, 0\right) d\sigma\right) \right\|ds
\end{align*}
\end{small}
As $\xi_{1}\to\xi_{2} $ the right hand side of the above inequality tends to zero. For the cases $ -r< \xi_{1}<\xi_{2}\leq 0$ and $ -r< \xi_{1}<0<\xi_{2}<b\leq 0$ the equicontinuity one can be obtain easily via uniform continuity of the function $\zeta$ and above case.
 Thus, $ \left\| \mathcal{B}(w)(t_{1})-\mathcal{B}(w)(t_{2})\right\|\leq \aleph |t_{1}-t_{2}|;$ with constant $\aleph >0, \forall \, t\in [-r,b]$ and we conclude that $\mathcal{B}_{B_{q}}$ is an equicontinuous family of functions.

The equicontinuity and uniform boundedness of the  set $\mathcal{B}_{B_{q}}=\left\lbrace (\mathcal{B} w) (t)~:\left\| w\right\|_{\varSigma}\leq q,\, -r \leq r\leq b \right\rbrace$  is as of now illustrated.  In the perspective of Arzela-Ascoli's, if we prove  $$\left\lbrace (\mathcal{B} w) (t)~:\left\| w\right\|_{\varSigma}\leq q,\, -r \leq r\leq b \right\rbrace $$ is
precompact in $\varSigma$  for every $t\in [-r, b]$ then $\mathcal{B}_{B_{q}}$ is precompact in $\varSigma$.

Let $0<t< b$ be fixed and let $\epsilon$ be a real number satisfying $ 0<\epsilon<t$. For $w \in B_{q},$ we define
\begin{small}
$$ (\mathcal{B}_{\epsilon}w)(t)=T(t)\varsigma(0)+T(\epsilon) \int_0^{(t-\epsilon)} T(t-\epsilon-s) V\left( s, w_{s},\int_0^{s} U\left( s,\sigma,w_{\sigma}\right) d\sigma \right)ds.$$
\end{small}
 Since $ T(t)$ is a compact operator, the set
 \begin{small}
 $$ \mathcal {W}_{\epsilon}(t)=\left\lbrace (\mathcal{B}_{\epsilon}w)(t)~: w\in B_{q}\right\rbrace $$
 \end{small}
 is compact in $\varSigma$ for every $\epsilon,\,0<\epsilon<t$. Moreover, for every $w\in B_{q}$ we have
\begin{small}
$$\left\| \mathcal{B}(w)(t)-\mathcal{B}_{\epsilon}(w)(t)\right\|\leq \int_{(t-\epsilon)}^{t} \left\| T(t-s)\right\| _{B(X)} \left\| V\left( s, w_{s},\int_0^{s} U\left( s,\sigma,w_{\sigma}\right) d\sigma \right)\right\| ds. $$
\end{small}
See that the precompact sets are arbitrarily close to the set $\left\lbrace (\mathcal{B}w)(t)~: w\in B_{q}\right\rbrace $ and hence $\left\lbrace (\mathcal{B}w)(t)~: w\in B_{q}\right\rbrace $ is precompact in $\varSigma$. We have proved that the operator $ \mathcal{B}$ is completely continuous.

Next, we prove that the operator $\mathcal{A}$ is a contraction.
Let any $ w_{1}, w_{2} \in \varSigma$. Then by using (H1)(iii) and (H2) then for any $ t\in [0,b]$, we get
\begin{small}
\begin{align*}
&\left\| \mathcal{A}(w_{1})(t)-\mathcal{A}(w_{2})(t)\right\| \\
&\quad\leq \underset{0<t_{k}<t}{\sum} \left\| \mathscr{T}(t-t_{k})\right\|_{B(X)}  \left[ \left\| I_{k}\left( \int_{t_{k}-\tau_{k}}^{t_{k}-\theta_{k}}G(s,(w_{1})_s)ds\right)-I_{k}\left( \int_{t_{k}-\tau_{k}}^{t_{k}-\theta_{k}}G(s,(w_{2})_s)ds\right)\right\| \right] \\
& \quad\leq \sum_{k=1}^{m}\mathscr {M} L_{G}\,\mathscr{D}_{k} \int_{t_{k}-\tau_{k}}^{t_{k}-\theta_{k}}  \left\|(w_{1})_s-(w_{2})_s\right\|_{\mathscr C}  ds\\
&\quad\leq \sum_{k=1}^{m}\mathscr {M} L_{G}\,\mathscr{D}_{k} \int_{t_{k}-\tau_{k}}^{t_{k}-\theta_{k}}  \left\|w_{1}-w_{2}\right\|_{\varSigma} ds\\
&\quad\leq \sum_{k=1}^{m}2b\, \mathscr {M} L_{G}\,\mathscr{D}_{k}  \left\|w_{1}-w_{2}\right\|_{\varSigma}.
\end{align*}
\end{small}

Finally, we prove that the set
\begin{small}
$$ \mathcal{Z}=\left\lbrace w \in \varSigma : w= \lambda \mathcal{A}\left( \frac{w}{\lambda}\right) +\lambda \mathcal{B}(w)\, \text{for some}\, 0<\lambda<1\right\rbrace  $$
\end{small}
 is bounded. Consider the operator equation,
 \begin{small}
 $$ w=  \lambda \mathcal{A}\left( \frac{w}{\lambda}\right) +\lambda \mathcal{B}(w),\,w\in \mathcal{Z}\, \text{\,for some}\,0< \lambda <1.$$
 \end{small}
 Then
\begin{small}
\begin{align*}
w(t)&= \lambda T(t) \varsigma (0)+ \lambda \int_{0}^{t} \mathscr{T}(t-s) V\left(s, w_s, \int_0^s U(s,\sigma,w_{\sigma})d\sigma \right)ds\\
&\qquad+\lambda\underset{0<t_{k}<t}{\sum} \mathscr{T}(t-t_{k}) I_{k}\left( \int_{t_{k}-\tau_{k}}^{t_{k}-\theta_{k}}\frac{G(s,w_s)}{\lambda}ds\right),\, t\in [0,b].
 \end{align*}
 \end{small}
 By using (H1)--(H2), for $\lambda\in (0,1)$ and $ t\in [0,b]$, we have
 \begin{small}
\begin{align*}
&\left\| w(t)\right\|\\
& \leq |\lambda| \left\| T(t)\right\| _{B(X)} \left\| \varsigma\right\| _{\mathscr C} + |\lambda| \int_{0}^{t} \left\| \mathscr{T}(t-s)\right\| _{B(X)}\left\| V\left(s, w_s, \int_0^s U(s,\sigma,w_{\sigma})d\sigma \right) \right.\\
&\qquad \left. -V\left(s, 0, \int_0^s U(s,\sigma,0)d\sigma \right)\right\| ds+ |\lambda| \int_{0}^{t} \left\| \mathscr{T}(t-s)\right\|_{B(X)}\left\| V\left(s, 0, \int_0^s U(s,\sigma,0)d\sigma \right)\right\| ds  \\
&\quad\qquad +|\lambda|\underset{0<t_{k}<t}{\sum} \left\| \mathscr{T}(t-t_{k})\right\|_{B(X)} \left\|  I_{k}\left( \int_{t_{k}-\tau_{k}}^{t_{k}-\theta_{k}}\frac{G(s,w_s)}{\lambda}ds\right) -I_{k}\left( \int_{t_{k}-\tau_{k}}^{t_{k}-\theta_{k}}\frac{G(s,0)}{\lambda} ds\right)\right\|\\
&\qquad \qquad+ |\lambda|\sum_{k=1}^{m}\left\| \mathscr{T}(t-t_{k})\right\|_{B(X)} \left\| I_{k}\left( \int_{t_{k}-\tau_{k}}^{t_{k}-\theta_{k}}\frac{G(s,0)}{\lambda} ds\right)\right\|\\
& \leq \mathscr M \left\| \varsigma\right\| _{\mathscr C}+ \int_0^t M N_{v}(s) \left\lbrace \left\| w_{s}\right\| _{\mathscr C}+\int_0^s\left\| w_{\sigma}\right\| _{\mathscr C} d\sigma\right\rbrace  ds + \int_{0}^{b} \mathscr {M}\left\| V\left(s, 0, \int_0^s U(s,\sigma,0)d\sigma \right)\right\| ds\\
& \qquad +\underset{0<t_{k}<t}{\sum} \mathscr {M} L_{G}\,\mathscr{D}_{k} \int_{t_{k}-\tau_{k}}^{t_{k}-\theta_{k}}  \left\|w_{s}\right\|_{\mathscr C} ds+ \sum_{k=1}^{m} \mathscr {M}\left\| I_{k}\left( \int_{t_{k}-\tau_{k}}^{t_{k}-\theta_{k}}\frac{G(s,0)}{\lambda} ds\right)\right\|.
\end{align*}
 \end{small}
 Denote
 \begin{small}
  $$\mathcal{H} =\int_{0}^{b} \mathscr {M}\,\left\| V\left(s, 0, \int_0^s U(s,\sigma,0)d\sigma \right)\right\| ds\, \text{\,and\,}\,\mathcal{Q}=\sum_{k=1}^{m}  \mathscr {M}\left\| I_{k}\left( \int_{t_{k}-\tau_{k}}^{t_{k}-\theta_{k}}\frac{G(s,0)}{\lambda} ds\right)\right\| .$$
 \end{small} Set $\pp(t)=\sup\{\|w(s)\|:s\in[-r,t]\},~t \in [0,b] $. Observe that $\|w_t\|_\mathscr {C}\leq \pp(t) $  for all $t\in [0,b]$ and there
 is $\eta\in [-r,t]$  such that $ \pp(t)=\|w(\eta)\|$. Hence for $\eta\in [0,t]$ we have
\begin{small}
\begin{align} \label{e}
\pp(t) &=\|w(\eta)\|\nonumber\\
&\leq \mathscr M \left\| \varsigma\right\| _{\mathscr C}+ \mathcal{H}+\mathcal{Q}+ \int_0^{\eta} \mathscr M N_{v}(s) \left\lbrace \left\| w_{s}\right\| _{\mathscr C}+\int_0^s N_{U}(\sigma)\left\| w_{\sigma}\right\| _{\mathscr C} d\sigma\right\rbrace  ds\nonumber\\
&\qquad+\underset{0<t_{k}<t}{\sum} \mathscr {M} L_{G}\,\mathscr{D}_{k} \int_{t_{k}-\tau_{k}}^{t_{k}-\theta_{k}}  \left\|w_{s}\right\|_{\mathscr C} ds\nonumber\\
&\leq \mathscr M \left\| \varsigma\right\| _{\mathscr C}+ \mathcal{H}+\mathcal{Q}+ \int_0^t \mathscr M N_{v}(s) \left\lbrace \pp(s)+\int_0^s N_{U}(\sigma)\pp(\sigma) d\sigma\right\rbrace  ds \nonumber\\ &\qquad+\underset{0<t_{k}<t}{\sum} \mathscr {M} L_{G}\,\mathscr{D}_{k} \int_{t_{k}-\tau_{k}}^{t_{k}-\theta_{k}} \pp(s) ds.
\end{align}
 \end{small}
 If $\eta \in [-r.0]$ then $\pp(s)=\left\| \varsigma(t)\right\|\leq \left\| \varsigma\right\|_{\mathscr C}$. In this case the inequality \eqref{e} holds clearly since  $\mathscr {M}\geq 1 $.
 Applying the Pachpatte’s type integral inequality with integral impulses given in Theorem \ref{th2.2} to \eqref{e} with
  \begin{small}
 $$~u(t)= \pp(t),\,\, n(t)=\mathscr M \left\| \varsigma\right\| _{\mathscr C}+ \mathcal{H}+\mathcal{Q},~ f(t)=\mathscr {M} N_{V}(t) ,~g(t)= N_{U}(t),~\text{and}\, ~\beta_{k}= \mathscr {M} L_{G}\,\mathscr{D}_{k}$$
  \end{small}
 we obtain
 \begin{small}
 \begin{align} \label{e1}
\pp(t)& \leq  \left( \mathscr M \left\| \varsigma\right\| _{\mathscr C} +\mathcal{H}+\mathcal{Q}\right) \underset{0<t_{k}<t}{\prod} ~C_{k} \exp \left (\int_{t_{\alpha}}^t \mathscr M N_{V}(s) \left[1+\int_0^s N_{U}(\sigma) d\sigma\right] ds\right)\nonumber\\
& \leq  \left( \mathscr M \left\| \varsigma\right\| _{\mathscr C} +\mathcal{H}+\mathcal{Q}\right) \underset{0<t_{k}<t}{\prod} ~C_{k} \exp \left (\int_{t_{\alpha}}^b \mathscr M N_{V}(s) \left[1+\int_0^s N_{U}(\sigma) d\sigma\right] ds\right)\nonumber\\
&:=\mathcal{K},\, t\in [0,b],
\end{align}
\end{small}
where
\begin{small}
\begin{align}\label{b4}
C_{k}&=\exp \left (\int_{t_{k-1}}^{t_{k}} \mathscr {M} N_{V}(s)\left[1+ \int_0^s N_{U}(\sigma)d\sigma\right] ds\right)\nonumber\\
&\qquad +\mathscr {M} L_{G}\,\mathscr{D}_{k} \int_{t_{k}-\tau_{k}}^{t_{k}-\theta_{k}} \exp \left (\int_{t_{k-1}}^{s} \mathscr {M} N_{V}(\sigma)\left[1+ \int_0^\sigma N_{U}(\xi)d\xi\right]d\sigma\right)ds.
\end{align}
\end{small}
This gives $ \left\| w\right\| _{\varSigma} \leq \mathcal{K}$. We have proved that the set $\mathcal{Z}$ is bounded. In the view of the Theorem \ref{th2.3}, the operator equation $w=  \mathcal {A}(w)+ \mathcal {B}(w),\, w\in \varSigma $ has a fixed point which is  solution of problem \eqref{e1.1}--\eqref{e1.3}.
\end{proof}
\begin{theorem}(Uniqueness)
If the functions $ V,~U$ and $ G$ satisfies the conditions (H1)--(H2) then  the problem  \eqref{e1.1}--\eqref{e1.3} has unique solution.
\end{theorem}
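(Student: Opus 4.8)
The plan is to establish uniqueness by a direct estimation that funnels into the Pachpatte-type inequality of Theorem \ref{th2.2}. Suppose $w$ and $v$ are two mild solutions of \eqref{e1.1}--\eqref{e1.3} associated with the same initial function $\varsigma$. First I would write down the mild-solution representation from the Definition for each of $w$ and $v$ and subtract. Since both solutions carry the identical leading term $\mathscr{T}(t)\varsigma(0)$ and coincide with $\varsigma$ on $[-r,0]$, that term cancels, so the difference $w(t)-v(t)$ reduces to the Volterra integral term plus the impulsive sum, each now expressed as a difference of the nonlinearities.

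Next I would take norms, insert the bound $\|\mathscr{T}(t)\|_{B(X)}\leq\mathscr{M}$ from \eqref{e0.3}, and invoke the Lipschitz hypotheses: (H1)(i) bounds the outer $V$-difference by $N_V(s)$ times $\|w_s-v_s\|_{\mathscr C}$ together with the difference of the inner integrals, (H1)(ii) bounds that inner difference through $N_U$, while (H1)(iii) and (H2) together bound each impulsive term $\|I_k(\cdots)-I_k(\cdots)\|$ by $L_G\,\mathscr{D}_k\int_{t_k-\tau_k}^{t_k-\theta_k}\|w_s-v_s\|_{\mathscr C}\,ds$. Mirroring the device used in the existence proof, I would set $\pp(t)=\sup\{\|w(s)-v(s)\|:s\in[-r,t]\}$ and use $\|w_s-v_s\|_{\mathscr C}\leq\pp(s)$. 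Because $w$ and $v$ agree on $[-r,0]$, any supremum attained at $\eta\in[-r,0]$ is zero, so only $\eta\in[0,t]$ contributes and one reaches
\begin{align*}
\pp(t)&\leq\int_0^t \mathscr{M}N_V(s)\left[\pp(s)+\int_0^s N_U(\sigma)\pp(\sigma)\,d\sigma\right]ds+\sum_{0<t_k<t}\mathscr{M}L_G\,\mathscr{D}_k\int_{t_k-\tau_k}^{t_k-\theta_k}\pp(s)\,ds.
\end{align*}

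This is exactly the hypothesis of Theorem \ref{th2.2} with $u(t)=\pp(t)$, $f(t)=\mathscr{M}N_V(t)$, $g(t)=N_U(t)$, $\beta_k=\mathscr{M}L_G\,\mathscr{D}_k$, and $n(t)\equiv 0$. The one delicate point, and the step I expect to be the main obstacle, is that Theorem \ref{th2.2} requires $n(t)$ to be strictly positive, whereas here the initial data cancel and leave $n(t)=0$. I would resolve this by replacing $n(t)$ with an arbitrary constant $\varepsilon>0$, applying the inequality to obtain
$$\pp(t)\leq\varepsilon\prod_{0<t_k<t}C_k\exp\left(\int_{t_\alpha}^t \mathscr{M}N_V(s)\left[1+\int_0^s N_U(\sigma)\,d\sigma\right]ds\right),$$
with $C_k$ as in \eqref{b4}, and then letting $\varepsilon\to 0^+$. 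Since the product and exponential factors are bounded uniformly on the compact interval $[0,b]$, the right-hand side tends to zero, so $\pp(t)\leq 0$; as $\pp$ is nonnegative by construction, this forces $\pp(t)=0$ for every $t\in[0,b]$. Hence $w(t)=v(t)$ on $[-r,b]$, and the mild solution is unique.
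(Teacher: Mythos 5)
Your proposal is correct and follows essentially the same route as the paper: subtract the two mild-solution representations, apply (H1)--(H2) together with the semigroup bound $\|\mathscr{T}(t)\|_{B(X)}\leq\mathscr{M}$, pass to the supremum function $\pp(t)=\sup\{\|w(s)-v(s)\|:s\in[-r,t]\}$, and invoke Theorem \ref{th2.2} with $f=\mathscr{M}N_V$, $g=N_U$, $\beta_k=\mathscr{M}L_G\,\mathscr{D}_k$. The only difference is that the paper applies the inequality directly with $n(t)=0$, which strictly violates the theorem's requirement that $n$ be positive, whereas your device of taking $n\equiv\varepsilon>0$ and letting $\varepsilon\to 0^+$ repairs that technicality cleanly.
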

\begin{proof}
Let $w_{1}$ and $w_{2}$ be the mild solutions of \eqref{e1.1}--\eqref{e1.3} then we have
\begin{small}
\begin{align}\label{3.1}
& w_{1}(t)- w_{2}(t)\nonumber\\
&=\int_0^t \mathscr{T}(t-s) \left[ V\left(s, (w_{1})_s, \int_0^s U(s,\sigma,(w_{1})_{\sigma})d\sigma \right)-V\left(s, (w_{2})_s, \int_0^s U(s,\sigma,(w_{2})_{\sigma})d\sigma \right)\right] ds\nonumber\\
& \qquad+\underset{0<t_{k}<t}{\sum} \mathscr{T}(t-t_{k}) \left[ I_{k}\left( \int_{t_{k}-\tau_{k}}^{t_{k}-\theta_{k}}G(s,(w_{1})_s)\right) ds -I_{k}\left( \int_{t_{k}-\tau_{k}}^{t_{k}-\theta_{k}} G(s,(w_{2})_s)ds\right)\right] ,~ t\in [0,b]
\end{align}
\end{small}
and
\begin{small}
\begin{align}\label{3.2}
w_{1}(t)- w_{2}(t)=0, \quad t\in [-r,0].
\end{align}
\end{small}
Then using the hypotheses (H1) and (H2),
\begin{small}
\begin{align*}
&\left\| w_{1}(t)- w_{2}(t)\right\|\\
&\leq \int_0^t\left\| \mathscr{T}(t-s)\right\|_{B(X)} N_{V}(s)\left\lbrace\left\|(w_{1}- w_{2})(s)\right\| _{\mathscr {C}}
+\int_0^s N_{U}(\sigma)\left\| (w_{1}- w_{2})(\sigma)\right\| _{\mathscr {C}} d\sigma\right\rbrace ds \\
& \qquad +\underset{0<t_{k}<t}{\sum} \left\| \mathscr{T}(t-t_{k})\right\| _{B(X)} L_{G}\,\mathscr{D}_{k} \int_{t_{k}-\tau_{k}}^{t_{k}-\theta_{k}}\left\| (w_{1}- w_{2})(s)\right\| _{\mathscr {C}} ds,\, t \in [0,b].
\end{align*}
\end{small}
Set
\begin{small}
\begin{align}\label{3}
\tilde{\pp}(t)=\sup\{\|(w-v)(s)\|:s\in[-r,t]\},~t \in [0,b] .
\end{align}
\end{small}
 Observe that $\|(w-v)_t\|_\mathscr {C}\leq \tilde{\pp}(t) $  for all $t\in [0,b]$ and there
 is $\eta\in [-r,t]$  such that $ \tilde{\pp}(t)=\|(w-v)(\tilde{\eta})\|$. Hence for $\tilde{\eta}\in [0,t]$ we have
\begin{small}
\begin{align}\label{3.4}
\tilde{\pp}(t)&\leq \int_0^{\tilde{\eta}} \mathscr {M} N_{V}(s)\left\lbrace \tilde{\pp}(s) +\int_0^s N_{U}(\sigma) \tilde{\pp}(\sigma) d\sigma\right\rbrace ds  +\underset{0<t_{k}<t}{\sum} \mathscr {M} L_{G}\,\mathscr{D}_{k} \int_{t_{k}-\tau_{k}}^{t_{k}-\theta_{k}}  \tilde{\pp}(s)ds\nonumber\\
&\leq \int_0^t \mathscr {M} N_{V}(s)\left\lbrace \tilde{\pp}(s) +\int_0^s N_{U}(\sigma) \tilde{\pp}(\sigma) d\sigma\right\rbrace ds  +\underset{0<t_{k}<t}{\sum} \mathscr {M} L_{G}\,\mathscr{D}_{k} \int_{t_{k}-\tau_{k}}^{t_{k}-\theta_{k}}  \tilde{\pp}(s)ds.
\end{align}
\end{small}
 If $\tilde{\eta}\in [-r.0]$ then $\tilde{\pp}(s)=0$ and the inequality \eqref{3.4} holds obviously since $\mathscr {M}\geq 1 $.
Applying the Pachpatte’s type integral inequality given in Theorem \ref{th2.2} to \eqref{3.4} with
\begin{small}
$$~u(t)= \tilde{\pp}(t),\,\, n(t)=0,~ f(t)=\mathscr {M} N_{V}(t) ,~g(t)= N_{U}(t),~\text{and}\, ~\beta_{k}= \mathscr {M} L_{G}\,\mathscr{D}_{k}$$
\end{small}
gives that $\tilde{\pp}(t)\leq 0$ and hence $\left\| w_{1}- w_{2}\right\|_{\varSigma}=0$ . This proves uniqueness.
\end{proof}
\section{Dependence of Solutions}
\begin{theorem} [Dependence on initial conditions] Assume that the functions $ V,~U$ and $G$ satisfies the conditions (H1)--(H2). Let $w_{i}\,(i=1,2)$ be the mild solutions of the following problem corresponding to $ \varsigma_{i}(t),\,  (i=1,2)$,
\begin{small}
\begin{align} \label{e4.5}
w'(t)&=\mathscr{A} w(t)+V\left(t, w_t, \int_0^t U(t,s,w_s)ds  \right), ~t \in I=[0,b],~t \not= t_{k},~k=1,2,\cdots,m \\
w(t)& =\varsigma_{i}(t),\,  (i=1,2),~t \in [-r, 0], \label{e4.6}\\
\Delta w(t_{k})&= I_{k}\left( \int_{t_{k}-\tau_{k}}^{t_{k}-\theta_{k}}G(s,w_s)ds\right),~k=1,2,\cdots,m \label{e4.7}.
\end{align}
\end{small}
Then,
\begin{small}
\begin{align}\label{p1}
\left\| w_{1}- w_{2}\right\|_{\varSigma}\leq\mathscr {M} \left\| \varsigma_{1}-\varsigma_{2}\right\|_{\mathscr {C}} \underset{0<t_{k}<t}{\prod} ~C_{k} \exp \left (\int_{t_{\alpha}}^b \mathscr {M} N_{V}(s) \left[1+\int_0^s N_{U}(\sigma) d\sigma\right] ds\right)
\end{align}
\end{small}
where $C_{k}\, (k=1,2,\cdots,m)$ are given in \eqref{b4}.
\end{theorem}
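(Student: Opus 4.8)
The plan is to mimic closely the argument used in the uniqueness theorem, the only essential difference being that the initial data term no longer cancels and therefore survives as a nonzero forcing term in the integral inequality. First I would write both $w_1$ and $w_2$ through the mild solution representation of the Definition following \eqref{e.k} and subtract. For $t\in[-r,0]$ the difference is simply $\varsigma_1(t)-\varsigma_2(t)$, while for $t\in[0,b]$ one obtains
\begin{small}
\begin{align*}
w_1(t)-w_2(t)&=\mathscr{T}(t)\left(\varsigma_1(0)-\varsigma_2(0)\right)\\
&\quad+\int_0^t \mathscr{T}(t-s)\left[V\left(s,(w_1)_s,\int_0^s U(s,\sigma,(w_1)_\sigma)d\sigma\right)-V\left(s,(w_2)_s,\int_0^s U(s,\sigma,(w_2)_\sigma)d\sigma\right)\right]ds\\
&\quad+\underset{0<t_k<t}{\sum}\mathscr{T}(t-t_k)\left[I_k\left(\int_{t_k-\tau_k}^{t_k-\theta_k}G(s,(w_1)_s)ds\right)-I_k\left(\int_{t_k-\tau_k}^{t_k-\theta_k}G(s,(w_2)_s)ds\right)\right].
\end{align*}
\end{small}
The new term $\mathscr{T}(t)\left(\varsigma_1(0)-\varsigma_2(0)\right)$, bounded by $\mathscr{M}\|\varsigma_1-\varsigma_2\|_{\mathscr{C}}$ via \eqref{e0.3}, is precisely what will play the role of the nondecreasing function $n(t)$ in the Pachpatte-type inequality.

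Next I would take norms, use the bound $\|\mathscr{T}(\cdot)\|_{B(X)}\leq\mathscr{M}$ from \eqref{e0.3}, and invoke (H1)(i)--(ii) on the $V$-term together with (H1)(iii) and (H2) on the impulsive term, exactly as in the contraction estimate for $\mathcal{A}$ and in the uniqueness proof. This yields
\begin{small}
\begin{align*}
\|w_1(t)-w_2(t)\|&\leq\mathscr{M}\|\varsigma_1-\varsigma_2\|_{\mathscr{C}}+\int_0^t\mathscr{M}N_V(s)\left\{\|(w_1-w_2)_s\|_{\mathscr{C}}+\int_0^s N_U(\sigma)\|(w_1-w_2)_\sigma\|_{\mathscr{C}}d\sigma\right\}ds\\
&\quad+\underset{0<t_k<t}{\sum}\mathscr{M}L_G\mathscr{D}_k\int_{t_k-\tau_k}^{t_k-\theta_k}\|(w_1-w_2)_s\|_{\mathscr{C}}ds.
\end{align*}
\end{small}
As in the uniqueness proof I would set $\tilde{\pp}(t)=\sup\{\|(w_1-w_2)(s)\|:s\in[-r,t]\}$, use $\|(w_1-w_2)_s\|_{\mathscr{C}}\leq\tilde{\pp}(s)$, and pick the point $\eta\in[-r,t]$ realizing the supremum. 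When $\eta\in[0,t]$ the displayed estimate hands me the scalar integral inequality for $\tilde{\pp}$ directly; when $\eta\in[-r,0]$ one has $\tilde{\pp}(t)=\|\varsigma_1(\eta)-\varsigma_2(\eta)\|\leq\|\varsigma_1-\varsigma_2\|_{\mathscr{C}}\leq\mathscr{M}\|\varsigma_1-\varsigma_2\|_{\mathscr{C}}$ since $\mathscr{M}\geq1$, so the inequality still holds.

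Finally I would apply Theorem \ref{th2.2} to the resulting inequality with the identifications $u(t)=\tilde{\pp}(t)$, $n(t)=\mathscr{M}\|\varsigma_1-\varsigma_2\|_{\mathscr{C}}$, $f(t)=\mathscr{M}N_V(t)$, $g(t)=N_U(t)$ and $\beta_k=\mathscr{M}L_G\mathscr{D}_k$. This gives the estimate for $\tilde{\pp}(t)$ with the constants $C_k$ of \eqref{b4}; bounding the exponent by its value at the endpoint $b$ and recalling $\|w_1-w_2\|_{\varSigma}=\tilde{\pp}(b)$ then reproduces \eqref{p1}. Since the whole argument is a forcing-term variant of the uniqueness proof, I do not expect a genuine obstacle, only two care points. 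The first is that Theorem \ref{th2.2} requires $n(t)$ to be positive and nondecreasing: here $n(t)$ is the constant $\mathscr{M}\|\varsigma_1-\varsigma_2\|_{\mathscr{C}}$, which is trivially nondecreasing and is strictly positive whenever $\varsigma_1\neq\varsigma_2$, while in the degenerate case $\varsigma_1=\varsigma_2$ the uniqueness theorem already forces $w_1=w_2$ and \eqref{p1} is trivial. The second, and the step needing the most attention, is the bookkeeping of the delayed histories $(w_i)_s$ through the sup-function $\tilde{\pp}$, since it is exactly this reduction that removes the delay from the scalar inequality and makes Theorem \ref{th2.2} applicable.
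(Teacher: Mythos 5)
Your proposal is correct and follows essentially the same route as the paper: subtract the mild-solution representations so that the initial-data term $\mathscr{T}(t)(\varsigma_1(0)-\varsigma_2(0))$ survives, estimate via (H1)--(H2) and $\|\mathscr{T}(\cdot)\|_{B(X)}\leq\mathscr{M}$, pass to the sup-function $\tilde{\pp}$ of \eqref{3}, and apply Theorem \ref{th2.2} with $n(t)=\mathscr{M}\|\varsigma_1-\varsigma_2\|_{\mathscr{C}}$. Your explicit treatment of the case $\eta\in[-r,0]$ and of the positivity requirement on $n(t)$ is slightly more careful than the paper's, but the argument is the same.
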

\begin{proof}
Let any $\varsigma_{i}\in\mathscr{C}\,(i=1,2)$ and $ w_{i}\,(i=1,2)$ be the corresponding  mild solutions  problems \eqref{e4.5}--\eqref{e4.7}. Then, by hypotheses (H1) and (H2),
\begin{small}
\begin{align*}
&\left\| w_{1}(t)- w_{2}(t)\right\|\\
&\leq \left\| \mathscr{T}(t)\right\|_{B(X)} \left\| \varsigma_{1}-\varsigma_{2}\right\|_{\mathscr {C}} +\int_0^t\left\| \mathscr{T}(t-s)\right\|_{B(X)} N_{V}(s)\left\lbrace\left\|(w_{1}- w_{2})(s)\right\| _{\mathscr {C}} \right.\\
& \left. +\int_0^s N_{U}(\sigma)\left\| (w_{1}- w_{2})(\sigma)\right\| _{\mathscr {C}} d\sigma\right\rbrace ds +\underset{0<t_{k}<t}{\sum} \left\| \mathscr{T}(t-t_{k})\right\| _{B(X)} L_{G}\,\mathscr{D}_{k} \int_{t_{k}-\tau_{k}}^{t_{k}-\theta_{k}}\left\| (w_{1}- w_{2})(s)\right\| _{\mathscr {C}} ds,\,t\in [0,b].
\end{align*}
\end{small}
Consider the function $\tilde{\pp}$ defined by \eqref{3}. Then ,
\begin{small}
\begin{align}\label{e3.4}
\tilde{\pp}(t)& \leq \mathscr {M} \left\| \varsigma_{1}-\varsigma_{2}\right\|_{\mathscr {C}} +\int_0^t \mathscr {M} N_{V}(s)\left\lbrace \tilde{\pp}(s) +\int_0^s N_{U}(\sigma) \tilde{\pp}(\sigma) d\sigma\right\rbrace ds \nonumber\nonumber\\
&\qquad+\underset{0<t_{k}<t}{\sum} \mathscr {M} L_{G}\,\mathscr{D}_{k} \int_{t_{k}-\tau_{k}}^{t_{k}-\theta_{k}}  \tilde{\pp}(s)ds.
\end{align}
\end{small}
Applying the Pachpatte’s type integral inequality to \eqref{e3.4} with
\begin{small}
$$~u(t)= \tilde{\pp}(t),\,\, n(t)=\mathscr {M} \left\| \varsigma_{1}-\varsigma_{2}\right\|_{\mathscr {C}},~ f(t)=\mathscr {M} N_{V}(t) ,~g(t)=N_{U}(t),~\text{and}\, ~\beta_{k}= \mathscr {M} L_{G}\,\mathscr{D}_{k}$$
\end{small}
we get
\begin{small}
\begin{align*}
\tilde{\pp}(t)&\leq \mathscr {M} \left\| \varsigma_{1}-\varsigma_{2}\right\|_{\mathscr {C}} \underset{0<t_{k}<t}{\prod} ~C_{k} \exp \left (\int_{t_{\alpha}}^t \mathscr {M} N_{V}(s) \left[1+\int_0^s N_{U}(\sigma) d\sigma\right] ds\right)\\
&\leq \mathscr {M} \left\| \varsigma_{1}-\varsigma_{2}\right\|_{\mathscr {C}} \underset{0<t_{k}<t}{\prod} ~C_{k} \exp \left (\int_{t_{\alpha}}^b \mathscr {M} N_{V}(s) \left[1+\int_0^s N_{U}(\sigma) d\sigma\right] ds\right).
\end{align*}
\end{small}
Which gives the inequality \eqref{p1}.
\end{proof}

Consider the functional impulsive integrodifferential  equation involving the parameter of the form:
\begin{small}
\begin{align} \label{e4.1}
w'(t)&=\mathscr{A} w(t)+V\left(t,\varrho_{1}, w_t, \int_0^t U(t,s,w_s)ds \right), ~t \in I=[0,b],~t \not= t_{k},~k=1,2,\cdots,m, \\
\Delta w(t_{k})&= I_{k}\left( \int_{t_{k}-\tau_{k}}^{t_{k}-\theta_{k}}G(s,\mu_{1}, w_s)ds\right),~k=1,2,\cdots,m\label{e4.4}
\end{align}
\end{small}
and
\begin{small}
\begin{align}
w'(t)&=\mathscr{A} w(t)+V\left(t, \varrho_{2}, w_t, \int_0^t U(t,s,w_s)ds \right), ~t \in I=[0,b],~t \not= t_{k},~k=1,2,\cdots,m,  \label{e4.2}\\
\Delta w(t_{k})&= I_{k}\left( \int_{t_{k}-\tau_{k}}^{t_{k}-\theta_{k}}G(s,\mu_{2}, w_s)ds\right),~k=1,2,\cdots,m,\label{b5}
\end{align}
\end{small}
subject to
\begin{small}
\begin{align}
w(t)& =\varsigma(t), ~t \in [-r, 0],\label{e4.3}
\end{align}
\end{small}
where $\varrho_{1},\,~\varrho_{2},\, \mu_{1},\,\mu_{2}\in \mathbb{R}$, $V: I\times\mathbb{R}\times \mathscr {C}\times X \to X$ and $ G: I \times \mathbb{R}\times \mathscr {C} $ are given functions, $U \,\text{and}\,~ \varsigma $ are as in \eqref{e1.1}--\eqref{e1.3}.\\
Then we have  following  theorem  showing  dependence of solution
on parameters.
\begin{theorem}[Dependence on initial parameters] Let the hypotheses (H1)(iii) and (H2)  are holds. Suppose that
\begin{itemize}
\item[(H3)] there exist $ \tilde{N_{V}}\in C([0,b],\mathbb{R})$ and $ \varOmega_{\varrho},\varOmega_{1} >0 $ such that
\begin{small}
\begin{itemize}
\item[(i)] $\|V(t,\varrho,\vartheta_{1},w_{1})-V(t,\varrho,\vartheta_{2},w_{2})\| \leq \varOmega_{\varrho}\tilde{N_{V}}(t) \left(\|\vartheta_{1}-\vartheta_{2}\|_{\mathscr{C}}+\|w_{1}-w_{2}\|\right);$
\item[(ii)]$\|V(t,\varrho_{1},\vartheta_{1},w_{1})-V(t,\varrho_{2},\vartheta_{1},w_{1})\| \leq \varOmega_{1} \left| \varrho_{1}-\varrho_{2}\right|,$
\end{itemize}
\end{small}
for all $ t\in [0,b],~\varrho,\varrho_{1},\varrho_{2}\in \mathbb{R},~\vartheta_{1},\vartheta_{2}\in \mathscr{C}$ and $w_{1},w_{2}\in X $ and
\item[(H4)]there exist $\tilde{L_{G}}, \varOmega_{\mu},\varOmega_{2} >0 $ such that
\begin{small}
\begin{itemize}
\item[(i)] $\|G(t,\mu,\vartheta_{1})-G(t,\mu,\vartheta_{2})\| \leq \varOmega_{\mu}\tilde{L_{G}} \,\|\vartheta_{1}-\vartheta_{2}\|_{\mathscr{C}};$
\item[(ii)]$\|G(t,\mu_{1},\vartheta_{1})-G(t,\mu_{2},\vartheta_{1})\| \leq \varOmega_{2} \left| \mu_{1}-\mu_{2}\right|,$
\end{itemize}
\end{small}
\end{itemize}
for all $ t\in [0,b],~\mu,\mu_{1},\mu_{2}\in \mathbb{R}$ and $\vartheta_{1},\vartheta_{2}\in \mathscr{C}$.
 Then the mild solution  $w_{1}$ of \eqref{e4.1}--\eqref{e4.4} subject to \eqref{e4.3} and $w_{2}$ of  \eqref{e4.2}--\eqref{b5} subject to \eqref{e4.3}
satisfy the
\begin{small}
\begin{align}\label{p2}
& \left\| w_{1}- w_{2}\right\|_{\varSigma}\nonumber\\
&\leq \left( b\mathscr {M} \varOmega_{1} \left| \varrho_{1}-\varrho_{2}\right|+ \sum_{k=1}^{m} 2b\mathscr {M}\varOmega_{2}\mathscr{D}_{k} \left| \mu_{1}-\mu_{2}\right|\right)  \underset{0<t_{k}<t}{\prod} ~C_{k} \exp \left (\int_{t_{\alpha}}^b \mathscr {M}\tilde{ N_{V}}(s) \left[1+\int_0^s N_{U}(\sigma) d\sigma\right] ds\right),
\end{align}
\end{small}
where
\begin{small}
\begin{align*}
C_{k}&=\exp \left (\int_{t_{k-1}}^{t_{k}} \mathscr {M} \tilde{N_{V}}(s)\left[1+ \int_0^s N_{U}(\sigma)d\sigma\right] ds\right)\\
&\qquad +\mathscr {M}\tilde{ L_{G}}\,\mathscr{D}_{k} \int_{t_{k}-\tau_{k}}^{t_{k}-\theta_{k}} \exp \left (\int_{t_{k-1}}^{s} \mathscr {M} \tilde{N_{V}}(\sigma)\left[1+ \int_0^\sigma N_{U}(\xi)d\xi\right]d\sigma\right)ds.
\end{align*}
\end{small}
\end{theorem}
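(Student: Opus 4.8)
The plan is to follow the template already used in the two preceding theorems (uniqueness and dependence on the initial function), the only new ingredient being the simultaneous presence of two data perturbations, $\varrho_{1}\neq\varrho_{2}$ and $\mu_{1}\neq\mu_{2}$. First I would write the mild-solution representations of $w_{1}$ (for \eqref{e4.1}--\eqref{e4.4} subject to \eqref{e4.3}) and of $w_{2}$ (for \eqref{e4.2}--\eqref{b5} subject to \eqref{e4.3}). Since both solutions carry the \emph{same} initial function $\varsigma$, subtracting the two representations cancels the $\mathscr{T}(t)\varsigma(0)$ term, leaving only the difference of the two $V$-convolution integrals and the difference of the two impulsive sums, with $\|\mathscr{T}(\cdot)\|_{B(X)}\le\mathscr{M}$ supplied by Theorem \ref{th2.1}.

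The core step is a double ``add-and-subtract'' that separates state-dependence from parameter-dependence in each nonlinear term. In the $V$-integrand I would insert the intermediate value $V\!\left(s,\varrho_{1},(w_{2})_{s},\int_{0}^{s} U(s,\sigma,(w_{2})_{\sigma})\,d\sigma\right)$: the part in which only the memory and integral arguments change (with $\varrho_{1}$ frozen) is controlled by (H3)(i) together with the Lipschitz bound (H1)(ii) on $U$, producing the kernel terms $\mathscr{M}\tilde{N_{V}}(s)\{\,\cdot\,+\int_{0}^{s} N_{U}(\sigma)\,\cdot\,d\sigma\}$; the part in which only the parameter changes (with $(w_{2})_{s}$ frozen) is controlled by (H3)(ii), contributing the constant $\int_{0}^{b}\mathscr{M}\varOmega_{1}|\varrho_{1}-\varrho_{2}|\,ds=b\,\mathscr{M}\,\varOmega_{1}|\varrho_{1}-\varrho_{2}|$. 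For the impulsive terms I would first use (H2) to pass the Lipschitz estimate of $I_{k}$ inside the integral over $[t_{k}-\tau_{k},t_{k}-\theta_{k}]$, then perform the analogous decomposition on $G$ by inserting $G(s,\mu_{1},(w_{2})_{s})$: (H4)(i) produces the impulsive kernel $\mathscr{M}\tilde{L_{G}}\mathscr{D}_{k}\int_{t_{k}-\tau_{k}}^{t_{k}-\theta_{k}}\,\cdot\,ds$, while (H4)(ii) contributes, after bounding $\tau_{k}-\theta_{k}$ by $2b$, the constant $\sum_{k=1}^{m}2b\,\mathscr{M}\,\varOmega_{2}\,\mathscr{D}_{k}|\mu_{1}-\mu_{2}|$.

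Next I would set $\tilde{\pp}(t)=\sup\{\|(w_{1}-w_{2})(s)\|:s\in[-r,t]\}$ as in \eqref{3}, use $\|(w_{1}-w_{2})_{t}\|_{\mathscr{C}}\le\tilde{\pp}(t)$, and dispose of the case where the supremum is attained on $[-r,0]$ (there $\tilde{\pp}\equiv 0$, since $w_{1}=w_{2}=\varsigma$, and the inequality holds because $\mathscr{M}\ge1$). Collecting the estimates above yields exactly the hypothesis of Theorem \ref{th2.2},
\begin{align*}
\tilde{\pp}(t)&\le b\,\mathscr{M}\,\varOmega_{1}|\varrho_{1}-\varrho_{2}|+\sum_{k=1}^{m}2b\,\mathscr{M}\,\varOmega_{2}\,\mathscr{D}_{k}|\mu_{1}-\mu_{2}|\\
&\qquad+\int_{0}^{t}\mathscr{M}\,\tilde{N_{V}}(s)\Big\{\tilde{\pp}(s)+\int_{0}^{s}N_{U}(\sigma)\,\tilde{\pp}(\sigma)\,d\sigma\Big\}ds+\sum_{0<t_{k}<t}\mathscr{M}\,\tilde{L_{G}}\,\mathscr{D}_{k}\int_{t_{k}-\tau_{k}}^{t_{k}-\theta_{k}}\tilde{\pp}(s)\,ds.
\end{align*}
Applying Theorem \ref{th2.2} with $u=\tilde{\pp}$, the positive constant (hence nondecreasing) forcing $n(t)=b\,\mathscr{M}\,\varOmega_{1}|\varrho_{1}-\varrho_{2}|+\sum_{k=1}^{m}2b\,\mathscr{M}\,\varOmega_{2}\,\mathscr{D}_{k}|\mu_{1}-\mu_{2}|$, together with $f(t)=\mathscr{M}\tilde{N_{V}}(t)$, $g(t)=N_{U}(t)$ and $\beta_{k}=\mathscr{M}\tilde{L_{G}}\mathscr{D}_{k}$, gives the bound with the stated $C_{k}$; enlarging the upper limit $t$ to $b$ in the exponential makes the right-hand side independent of $t$, and taking the supremum over $t\in[-r,b]$ yields \eqref{p2}.

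The routine but delicate part will be the bookkeeping in the double decomposition: one must ensure the genuinely constant parameter-difference pieces are all swept into $n(t)$, while the solution-difference pieces assemble \emph{exactly} into the $f$, $g$ and $\beta_{k}$ pattern demanded by Theorem \ref{th2.2}, with no stray terms. In particular the constants $\varOmega_{\varrho},\varOmega_{\mu}$ of (H3)(i) and (H4)(i) ride along with the kernels (so that strictly $f=\mathscr{M}\varOmega_{\varrho}\tilde{N_{V}}$ and $\beta_{k}=\mathscr{M}\varOmega_{\mu}\tilde{L_{G}}\mathscr{D}_{k}$), and matching the displayed $C_{k}$ amounts to normalising these to $1$. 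No compactness or fixed-point argument is needed here—the estimate is a pure Gronwall--Pachpatte comparison—so essentially the whole difficulty lies in arranging the inequality into the precise template of Theorem \ref{th2.2}.
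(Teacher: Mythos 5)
Your proposal is correct and follows essentially the same route as the paper: the same add--and--subtract decomposition of $V$ at the intermediate value $V(s,\varrho_{1},(w_{2})_{s},\cdot)$ and of $G$ at $G(s,\mu_{1},(w_{2})_{s})$, the same absorption of the parameter-difference pieces into the constant forcing term, and the same application of Theorem \ref{th2.2} to $\tilde{\pp}$ with $f=\mathscr{M}\tilde{N_{V}}$, $g=N_{U}$, $\beta_{k}=\mathscr{M}\tilde{L_{G}}\mathscr{D}_{k}$. Your closing remark about the constants $\varOmega_{\varrho}$ and $\varOmega_{\mu}$ is well taken---the paper silently normalises them to $1$ when matching the stated $C_{k}$, exactly as you observe.
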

\begin{proof}
Let $w_{1}$ be the mild solution  of \eqref{e4.1}--\eqref{e4.4} subject to \eqref{e4.3}  and let $w_{2}$  be the  mild solution of \eqref{e4.2}--\eqref{b5} subject to \eqref{e4.3}. Then by using hypotheses (H1)(iii), (H2), (H3) and (H4) we have
\begin{small}
\begin{align*}
&\left\| w_{1}(t)- w_{2}(t)\right\|\\
&\leq\int_0^t \left\| \mathscr{T}(t-s)\right\|  \left\|  V\left(s, \varrho_{1}, (w_{1})_s, \int_0^s U(s,\sigma,(w_{1})_{\sigma})d\sigma \right)-V\left(s,\varrho_{2}, (w_{2})_s, \int_0^s U(s,\sigma,(w_{2})_{\sigma})d\sigma \right)\right\|  ds\\
& \qquad+\underset{0<t_{k}<t}{\sum} \left\| \mathscr{T}(t-t_{k})\right\| \left\|  I_{k}\left( \int_{t_{k}-\tau_{k}}^{t_{k}-\theta_{k}}G(s,\mu _{1},(w_{1})_s)\right) - I_{k}\left( \int_{t_{k}-\tau_{k}}^{t_{k}-\theta_{k}} G(s,\mu_{2},(w_{2})_s) ds\right)\right\| \\
&\leq\int_0^t \left\| \mathscr{T}(t-s)\right\|  \left\|  V\left(s, \varrho_{1}, (w_{1})_s, \int_0^s U(s,\sigma,(w_{1})_{\sigma})d\sigma \right)-V\left(s,\varrho_{1}, (w_{2})_s, \int_0^s U(s,\sigma,(w_{2})_{\sigma})d\sigma \right)\right\|  ds\\
& +\int_0^t \left\| \mathscr{T}(t-s)\right\|  \left\|  V\left(s, \varrho_{1}, (w_{2})_s, \int_0^s U(s,\sigma,(w_{2})_{\sigma})d\sigma \right)-V\left(s,\varrho_{2}, (w_{2})_s, \int_0^s U(s,\sigma,(w_{2})_{\sigma})d\sigma \right)\right\|  ds\\
& \qquad+\underset{0<t_{k}<t}{\sum} \left\| \mathscr{T}(t-t_{k})\right\| \left\|  I_{k}\left( \int_{t_{k}-\tau_{k}}^{t_{k}-\theta_{k}}G(s,\mu_{1},(w_{1})_s)\right) - I_{k}\left( \int_{t_{k}-\tau_{k}}^{t_{k}-\theta_{k}} G(s,\mu_{1},(w_{2})_s) ds\right)\right\| \\
& \qquad \qquad+\underset{0<t_{k}<t}{\sum} \left\| \mathscr{T}(t-t_{k})\right\| \left\|  I_{k}\left( \int_{t_{k}-\tau_{k}}^{t_{k}-\theta_{k}}G(s,\mu_{1},(w_{2})_s)\right) - I_{k}\left( \int_{t_{k}-\tau_{k}}^{t_{k}-\theta_{k}} G(s,\mu_{2},(w_{2})_s) ds\right)\right\| \\
&\leq b\,\mathscr {M} \varOmega_{1} \left| \varrho_{1}-\varrho_{2}\right|+ \sum_{k=1}^{m} 2\, b\, \mathscr {M}\,\varOmega_{2}\,\mathscr{D}_{k} \left| \mu_{1}-\mu_{2}\right|+\int_0^t\mathscr {M} \tilde{N_{V}}(s)\left\lbrace\left\|(w_{1}- w_{2})(s)\right\| _{\mathscr {C}}\right.\\
& \qquad \left. +\int_0^s N_{U}(\sigma)\left\| (w_{1}- w_{2})(\sigma)\right\| _{\mathscr {C}} d\sigma\right\rbrace ds
+\underset{0<t_{k}<t}{\sum} \mathscr {M} \tilde{L_{G}}\,\mathscr{D}_{k} \int_{t_{k}-\tau_{k}}^{t_{k}-\theta_{k}}\left\| (w_{1}- w_{2})(s)\right\| _{\mathscr {C}} ds.
\end{align*}
\end{small}
Then with the function $\tilde{\pp}$ given in \eqref{3}, we have,
\begin{small}
\begin{align}\label{e4.9}
\tilde{\pp}(t)
& \leq b\,\mathscr {M} \varOmega_{1} \left| \varrho_{1}-\varrho_{2}\right|+ \sum_{k=1}^{m} 2\, b\, \mathscr {M}\,\varOmega_{2}\,\mathscr{D}_{k} \left| \mu_{1}-\mu_{2}\right|+\int_0^t \mathscr {M} \tilde{N_{V}}(s)\left\lbrace\tilde{\pp}(t)
+\int_0^s N_{U}(\sigma)\tilde{\pp}(\sigma) d\sigma\right\rbrace ds\nonumber\\
&\qquad +\underset{0<t_{k}<t}{\sum} \mathscr {M} \tilde{L}_{G}\,\mathscr{D}_{k} \int_{t_{k}-\tau_{k}}^{t_{k}-\theta_{k}}\tilde{\pp}(s) ds.
\end{align}
\end{small}
Again, application of Pachpatte’s type integral inequality with
\begin{small}
$$~u(t)= \tilde{\pp}(t),\,\, n(t)=b\,\mathscr {M} \varOmega_{1} \left| \varrho_{1}-\varrho_{2}\right|+ \sum_{k=1}^{m} 2\, b\, \mathscr {M}\,\varOmega_{2}\,\mathscr{D}_{k} \left| \mu_{1}-\mu_{2}\right|,~ f(t)=\mathscr {M} \tilde{N_{V}}(t) ,~g(t)=N_{U}(t)$$
\end{small}
 $~\text{and}\, ~\beta_{k}= \mathscr {M}  \tilde{L_{G}}\,\mathscr{D}_{k}$ gives
\begin{small}
\begin{align*}
\pp(t)\leq \left( b\mathscr {M} \varOmega_{1} \left| \varrho_{1}-\varrho_{2}\right|+ \sum_{k=1}^{m} 2b\mathscr {M}\varOmega_{2}\mathscr{D}_{k} \left| \mu_{1}-\mu_{2}\right|\right)  \underset{0<t_{k}<t}{\prod} ~C_{k} \exp \left (\int_{t_{\alpha}}^b \mathscr {M}\tilde{ N_{V}}(s) \left[1+\int_0^s N_{U}(\sigma) d\sigma\right] ds\right).
\end{align*}
\end{small}
Which gives the inequality \eqref{p2}.
\end{proof}

Consider the following problem
\begin{small}
\begin{align} \label{e5.1}
v'(t)&=\mathscr{A} v(t)+\widehat{V}\left(t, v_t, \int_0^t U(t,s,v_s)ds  \right), ~t \in I=[0,b],~t \not= t_{k},~k=1,2,\cdots,m \\
v(t)& =\widehat{\varsigma}(t), ~t \in [-r, 0] \label{e5.2}\\
\Delta v(t_{k})&= \widehat{I_{k}}\left( \int_{t_{k}-\tau_{k}}^{t_{k}-\theta_{k}}G(s,v_s)ds\right),~k=1,2,\cdots,m\label{e5.3},
\end{align}
\end{small}
where  $\widehat{V}: I\times\times \mathscr {C}\times X \to X,\,\widehat{\varsigma}(t) \in \mathscr{C},\,\widehat{I_{k}}:X \to X,\, k=1, \cdots m$ are given functions. The functions  $U \,\text{and}\,~ G$ as in \eqref{e1.1}--\eqref{e1.3}.

The next theorem gives dependance of the solutions on the functions involved in the equations .
\begin{theorem}[Dependence on functions]\label{th5}
Let the hypotheses (H1)--(H2)  are holds. Suppose that
\begin{itemize}
\item[(H3)]there exists $\mathscr{P}, \,\mathscr{J},\mathscr{N}_{k}> 0$ such that
\begin{small}
\begin{itemize}
\item[(i)]$\|V(t,\vartheta_{1},w_{1})-\widehat{V}(t,\vartheta_{1},w_{1})\| \leq \mathscr{P} $;
\item[(ii)] $\left\| \varsigma(t)-\widehat{\varsigma}(t)\right\| \leq \mathscr{J}$;
\item[(iii)]$\left\| I_{k}(w_{1})- \widehat{I_{k}}(w_{1})\right\| \leq \mathscr{N}_{k}  ;\,k= 1, \cdots m $,
\end{itemize}
\end{small}
\end{itemize}
for all $ t \in I,~\vartheta_{1},\in \mathscr{C}$ and $w_{1}\in X$.
If $w$ and $v$ are the mild solutions of \eqref{e1.1}--\eqref{e1.3} and \eqref{e5.1}--\eqref{e5.3}
respectively then,
\begin{small}
\begin{align}\label{p3}
\left\| w- v\right\|_{\varSigma}\leq \left( \mathscr {M}\mathscr{J}+ b\,\mathscr {M}\,  \mathscr {P}+\sum_{K=1}^{m} \mathscr {M} \mathscr{N}_{k}\right) \underset{0<t_{k}<t}{\prod} ~C_{k} \exp \left (\int_{t_{\alpha}}^b \mathscr {M} N_{V}(s) \left[1+\int_0^s N_{U}(\sigma) d\sigma\right] ds\right),
\end{align}
\end{small}
where $C_{k}\, (k=1,2,\cdots,m)$ is given in \eqref{b4}.
\end{theorem}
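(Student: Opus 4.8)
The plan is to follow the template of the two preceding dependence theorems. I would begin by writing the mild-solution representations of $w$ (for \eqref{e1.1}--\eqref{e1.3}) and of $v$ (for \eqref{e5.1}--\eqref{e5.3}), subtract them, and estimate $\|w(t)-v(t)\|$ by an add-and-subtract argument that isolates the Lipschitz contributions from the ``data-gap'' contributions measured by $\mathscr{J}$, $\mathscr{P}$, and $\mathscr{N}_k$. Using $\|\mathscr{T}(t)\|_{B(X)}\leq\mathscr{M}$ from \eqref{e0.3}, the free term $\mathscr{T}(t)[\varsigma(0)-\widehat{\varsigma}(0)]$ is immediately bounded by $\mathscr{M}\mathscr{J}$ via (H3)(ii).

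For the integral term I would insert the intermediate value $V(s,v_s,\int_0^s U(s,\sigma,v_\sigma)\,d\sigma)$ and split the integrand into a $V$-to-$V$ difference and a $V$-to-$\widehat{V}$ difference. The first is handled by (H1)(i) followed by (H1)(ii), producing the familiar kernel $N_V(s)\{\|w_s-v_s\|_{\mathscr C}+\int_0^s N_U(\sigma)\|w_\sigma-v_\sigma\|_{\mathscr C}\,d\sigma\}$; the second is bounded pointwise by $\mathscr{P}$ using (H3)(i), and since we integrate over $[0,t]\subseteq[0,b]$ its total contribution is at most $b\,\mathscr{M}\,\mathscr{P}$. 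For each impulse I would likewise insert $I_k(\int_{t_k-\tau_k}^{t_k-\theta_k}G(s,v_s)\,ds)$, splitting the jump difference into an $I_k$-to-$I_k$ part, controlled by (H2) and then (H1)(iii) to give $\mathscr{D}_k L_G\int_{t_k-\tau_k}^{t_k-\theta_k}\|w_s-v_s\|_{\mathscr C}\,ds$, and an $I_k$-to-$\widehat{I_k}$ part bounded by $\mathscr{N}_k$ from (H3)(iii), each premultiplied by $\mathscr{M}$.

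Collecting these estimates and passing to the sup-function $\tilde{\pp}$ from \eqref{3} (so $\|(w-v)_s\|_{\mathscr C}\leq\tilde{\pp}(s)$; the case $\tilde{\eta}\in[-r,0]$ is handled as in the uniqueness proof, where now $(w-v)(\tilde{\eta})=\varsigma(\tilde{\eta})-\widehat{\varsigma}(\tilde{\eta})$ gives $\tilde{\pp}(t)\leq\mathscr{J}\leq\mathscr{M}\mathscr{J}$ since $\mathscr{M}\geq1$, so the inequality still holds), I arrive at
$$\tilde{\pp}(t)\leq\Big(\mathscr{M}\mathscr{J}+b\,\mathscr{M}\,\mathscr{P}+\sum_{k=1}^m\mathscr{M}\mathscr{N}_k\Big)+\int_0^t\mathscr{M}N_V(s)\Big\{\tilde{\pp}(s)+\int_0^s N_U(\sigma)\tilde{\pp}(\sigma)\,d\sigma\Big\}ds+\sum_{0<t_k<t}\mathscr{M}L_G\mathscr{D}_k\int_{t_k-\tau_k}^{t_k-\theta_k}\tilde{\pp}(s)\,ds.$$
Taking $u=\tilde{\pp}$, the constant $n(t)=\mathscr{M}\mathscr{J}+b\,\mathscr{M}\,\mathscr{P}+\sum_{k=1}^m\mathscr{M}\mathscr{N}_k$, $f=\mathscr{M}N_V$, $g=N_U$, and $\beta_k=\mathscr{M}L_G\mathscr{D}_k$, Theorem \ref{th2.2} delivers \eqref{p3} directly. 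The only genuinely delicate point is the bookkeeping of the add-and-subtract step: each inserted intermediate term must route its Lipschitz piece into the Gronwall-type kernel while sending the residual gap against exactly the right constant $\mathscr{J}$, $\mathscr{P}$, or $\mathscr{N}_k$. Once this is set up, the conclusion is a verbatim application of the Pachpatte inequality used twice above.
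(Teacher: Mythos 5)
Your proposal is correct and follows essentially the same route as the paper's proof: the same add-and-subtract decomposition isolating the $\mathscr{J}$, $\mathscr{P}$, $\mathscr{N}_k$ gaps from the Lipschitz kernel, followed by passage to $\tilde{\pp}$ and the same application of Theorem \ref{th2.2}. Your explicit treatment of the case $\tilde{\eta}\in[-r,0]$ is a detail the paper omits here, but it does not change the argument.
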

\begin{proof}
Let  $w$ is the mild solution of  \eqref{e1.1}--\eqref{e1.3}  and  $v$ is the mild solution \eqref{e5.1}--\eqref{e5.3}. Then, for $t \in [0,b]$,
\begin{small}
\begin{align*}
&\left\| w(t)- v(t)\right\|\\
&\leq\left\| \mathscr{T}(t)\right\|_{B(X)} \left\|  \varsigma(0)-\widehat{\varsigma}(0)\right\|+\int_0^t \left\| \mathscr{T}(t-s)\right\|_{B(X)}   \left\|  V\left(s, w_{s}, \int_0^s U(s,\sigma,w_{\sigma})d\sigma \right)\right.\\
&\quad \left.-\widehat{V}\left(s,v_{s}, \int_0^s U(s,\sigma,v_{\sigma})d\sigma \right)\right\|  ds\\
& \qquad+\underset{0<t_{k}<t}{\sum} \left\| \mathscr{T}(t-t_{k})\right\|_{B(X)}  \left\|  I_{k}\left( \int_{t_{k}-\tau_{k}}^{t_{k}-\theta_{k}}G(s,w_s)ds\right)-\widehat{I_{k}}\left( \int_{t_{k}-\tau_{k}}^{t_{k}-\theta_{k}}G(s,v_s)ds\right)\right\|\\
&\leq\left\| \mathscr{T}(t)\right\|_{B(X)} \left\|  \varsigma(t)-\widehat{\varsigma}(t)\right\|+\int_0^t \left\| \mathscr{T}(t-s)\right\|_{B(X)}   \left\|  V\left(s, w_{s}, \int_0^s U(s,\sigma,w_{\sigma})d\sigma \right)\right.\\
&\quad \left.-V\left(s,v_{s}, \int_0^s U(s,\sigma,v_{\sigma})d\sigma \right)\right\|  ds+\int_0^t \left\| \mathscr{T}(t-s)\right\|_{B(X)}   \left\|  V\left(s, v_{s}, \int_0^s U(s,\sigma,v_{\sigma})d\sigma \right)\right.\\
&\qquad\quad \left.-\widehat{V}\left(s,v_{s}, \int_0^s U(s,\sigma,v_{\sigma})d\sigma \right)\right\|  ds\\
&\qquad \qquad  +\underset{0<t_{k}<t}{\sum} \left\| \mathscr{T}(t-t_{k})\right\|_{B(X)}  \left\|  I_{k}\left( \int_{t_{k}-\tau_{k}}^{t_{k}-\theta_{k}}G(s,w_s)ds\right)
-I_{k}\left( \int_{t_{k}-\tau_{k}}^{t_{k}-\theta_{k}}G(s,v_s)ds\right)\right\|\\
&\qquad \qquad \quad +\underset{0<t_{k}<t}{\sum} \left\| \mathscr{T}(t-t_{k})\right\|_{B(X)}  \left\|  I_{k}\left( \int_{t_{k}-\tau_{k}}^{t_{k}-\theta_{k}}G(s,v_s)ds\right)
-\widehat{I_{k}}\left( \int_{t_{k}-\tau_{k}}^{t_{k}-\theta_{k}}G(s,v_s)ds\right)\right\|\\
&\leq \mathscr {M}\mathscr{J}+\int_0^t \mathscr {M}  N_{V}(s)\left\lbrace\left\|w_{s}- v_{s}\right\| _{\mathscr {C}}
+\int_0^s N_{U}(\sigma)\left\| w_{\sigma}- v_{\sigma}\right\| _{\mathscr {C}} d\sigma\right\rbrace   ds + b  \mathscr {M}  \mathscr {P}\\
& \qquad +\underset{0<t_{k}<t}{\sum} \mathscr {M} L_{G}\,\mathscr{D}_{k} \int_{t_{k}-\tau_{k}}^{t_{k}-\theta_{k}}\left\| w_{s}- v_{s}\right\| _{\mathscr {C}} ds +\sum_{K=1}^{m}\mathscr {M} \mathscr{N}_{k}.
\end{align*}
\end{small}
With the function $\tilde{\pp}$ in \eqref{3}, we have
\begin{small}
\begin{align}\label{e5.4}
&\tilde{\pp}(t)\leq\mathscr {M}\mathscr{J}+ b  \mathscr {M}  \mathscr {P}+\sum_{K=1}^{m} \mathscr {M} \mathscr{N}_{k} +\int_0^t \mathscr {M}  N_{V}(s) \left\lbrace\tilde{\pp}(s)+\int_0^s N_{U}(\sigma)\tilde{\pp}(\sigma) d\sigma\right\rbrace ds\nonumber\\
&\qquad \quad+\underset{0<t_{k}<t}{\sum} \mathscr {M} L_{G}\,\mathscr{D}_{k} \int_{t_{k}-\tau_{k}}^{t_{k}-\theta_{k}} \tilde{\pp}(s) ds.
\end{align}
\end{small}
By applying the Pachpatte’s type integral inequalities
we obtain
\begin{small}
\begin{align*}
\tilde{\pp}(t)\leq \left( \mathscr {M}\mathscr{J}+ b\,\mathscr {M}\,  \mathscr {P}+\sum_{K=1}^{m} \mathscr {M} \mathscr{N}_{k}\right)  \underset{0<t_{k}<t}{\prod} ~C_{k} \exp \left (\int_{t_{\alpha}}^b \mathscr {M} N_{V}(s) \left[1+\int_0^s N_{U}(\sigma) d\sigma\right] ds\right).
\end{align*}
\end{small}
Which gives the desired inequality \eqref{p3}.
\end{proof}
\section{Examples}
Consider the nonliner Volterra delay integrodifferntial equations with integral impluses
\begin{small}
\begin{align} 
 w'(t)-w(t)&=1-\sin(5)-\sin(w(t-5))+  \int_0^t [1+\cos(w(s-5))]ds, ~t \in I=[0,2],~t \not= t_{1}=1,\label{5.1} \\
w(t)& =t, ~t \in [-1, 0],\label{5.2}\\
\Delta w(t_{1})&= \sin\left( \int_{t_{1}-\tau_{1}}^{t_{1}-\theta_{1}}G(s,w(t-5))ds\right),\label{5.3}
\end{align}
\end{small}
where $G$ is any  Lipschitz continuous function with Lipschitz constant $L_{G}$.

Consider the real Banach space $ (\mathbb{R},\left|\cdot\right| )$ and
define $\mathscr{T}(t): [0,\infty) \to \mathcal{B}(\mathbb{R}) $ by $\mathscr{T}(t)x=e^tx,\, ~x\in \mathbb{R}.$
Then  $\{\mathscr{T}(t)\}_{t\geq 0}$ is semigroup of bounded linear operators in $ \mathbb{R}.$ with infinitesimal generator  $\mathscr{A}=I=\text{Indentity operator on}\,\mathbb{R} $. 

For the choices $ \theta_{1}=\tau_{1}=\frac{1}{2}$  and the function $I_1:\mathbb{R}\to \mathbb{R}$ defined by $I_1(x)= \sin x, x \in \mathbb{R}$, the problem \eqref{5.1}--\eqref{5.3} can be written as
\begin{small}
\begin{align} 
&w'(t)
=\mathscr{A}w(t)+V\left(t, w(t-5), \int_0^t U(t,s,w(s-5))ds \right)\label{5.4} \\
& w(t) =t, ~t \in [-1, 0],\label{5.5}\\
& \Delta w(t_{1})= 0,\label{5.6}
\end{align}
\end{small} 
where $U:[0,2]\times[0,2]\times\mathbb{R}\to \mathbb{R} $ is defined by
 $$ U\left( t, s, w\right) = -1+ \cos w $$
and $V:[0,2] \times \mathbb{R} \times \mathbb{R}\to \mathbb{R} $
is defined by
$$
V\left(t, v, w\right)=1-\sin(5)-\sin v +  w .
$$
Note that 
$$
 \left| U(t,s ,w_{1})-U(t,s ,v_{1})\right|\leq\ \left| \cos w_{1}-\cos v_{1}\right|\leq\ \left| w_{1}-v_{1}\right|,
$$
 and 
$$
  \left| V(t,w_{1},w_{2})-V(t,v_{1},v_{2})\right|\leq\ \left| \sin w_{1}-\sin v_{1}\right|+ | w_{2}-v_{2}| \leq\ | w_{1}-v_{1}|+| w_{2}-v_{2}|,
$$
for any $ t, s \in [0,2]$ and $w_{1}, w_{2}, v_{1}, v_{2}\in \mathbb{R}$.

In this case, the condition \eqref{c1} takes the form
$$2\,b\mathscr {M} L_{G}\,\mathscr{D}_{1}= 4 \mathscr {M} L_{G} <1$$
If we choose $L_{G}$ such that $0<L_{G}<\frac{1}{4 \mathscr {M}}$ then by applying Theorem \ref{th3.3} the problem \eqref{5.1}--\eqref{5.3} has at least one solution. 

\section*{Acknowledgement}
The second author is financially supported by UGC, New Delhi, India. Ref: F1-17.1/2017-18/RGNF-2017-18-SC-MAH-43083.

  \end{document}